\documentclass[11pt]{amsart}
\usepackage{hyperref}
\usepackage{tikz}
\usepackage[active]{srcltx}
\usepackage{xypic}
\usepackage{graphicx}
\usepackage{tikz-cd} 
\usepackage{amsmath}

\newtheorem{theorem}{Theorem}[section]
\newtheorem{lemma}[theorem]{Lemma}
\newtheorem{proposition}[theorem]{Proposition}
\newtheorem{corollary}[theorem]{Corollary}

\newtheorem{definition}[theorem]{Definition}
				
\theoremstyle{remark}
\newtheorem{example}[theorem]{\bf Example}

\theoremstyle{plain}

\newcommand{\x}{\boldsymbol{x}}
\newcommand{\g}{\boldsymbol{g}}

\newcommand{\p}{\partial}

\newcommand{\s}{\Sigma}
\newcommand{\N}{\Bbb N}

\newcommand{\pp}{\boldsymbol{P}}
\newcommand{\q}{\boldsymbol{Q}}
\newcommand{\bnd}{\boldsymbol{bnd}}
\newcommand{\la}{\lambda}

\title{Random walks of infinite moment on free semigroups}
\author[B. Forghani]{Behrang Forghani}
\address{Department of Mathematics\\
University of Connecticut\\
341 Mansfield Rd, Storrs, CT, USA}
\email{\href{mailto:behrang.forghani@uconn.edu}{behrang.forghani@uconn.edu}}

\author[G. Tiozzo]{Giulio Tiozzo}
\address{Department of Mathematics\\ 
University of Toronto\\ 
40 St George St\\ 
Toronto, ON, Canada\\}
\email{\href{mailto:tiozzo@math.toronto.edu}{tiozzo@math.toronto.edu}}

\begin{document}

\maketitle

\begin{abstract}
We consider random walks on finitely or countably generated free semigroups, and identify their Poisson boundaries for classes of measures 
which fail to meet the classical entropy criteria.  
In particular, we introduce the notion of $w$--logarithmic moment,  and we show that if a random walk on a free semigroup has \emph{either} finite entropy \emph{or} finite $w$-logarithmic moment for some word $w$, then the space of infinite words with the resulting hitting measure is the Poisson boundary.
\end{abstract}

\section{Introduction}
The notion of Poisson boundary for Markov chains goes back to
the work of Feller \cite{Feller} and Blackwell \cite{Blackwell1955}, 
who showed that the Poisson boundary of random walks on an abelian group is always trivial (i.e. a singleton). Their work did not get much attention, 
since in all known examples the boundary was trivial. Then in the 1960's, Furstenberg showed that the Poisson boundary for a random walk on a non-amenable group is non-trivial, and identified the boundary for certain random walks on lattices in Lie groups. 
He then employed the theory of Poisson boundary 
to prove several fundamental rigidity results for lattices in Lie groups (see \cite{Fu70}). 

In general, given a pair $(G, \mu)$, where $G$ is a group (or semigroup) and $\mu$ a probability measure on $G$, the main question is to identify 
the Poisson boundary, which is always defined as an abstract measure space, with a concrete boundary of the group given e.g. by a topological compactification. More precisely, in many cases one can prove that the random walk on $G$ with distribution $\mu$ converges almost surely 
in a suitable topological boundary $\partial G$, hence $\partial G$ is equipped with the \emph{hitting measure} $\lambda$ of the random walk. 
Then the question becomes whether the pair $(\partial G, \lambda)$ is the Poisson boundary of the random walk $(G, \mu)$.

This question has been studied for almost 50 years for a large number of different groups. 
One of the first examples of an explicit non-trivial boundary has been established by Dynkin-Maljutov \cite{Dynkin-Maljutov61}, who identified the Poisson boundary of  a first neighbor random walk on a free group of rank $2$ with the space of reduced infinite words. 
For hyperbolic groups, the identification goes back to Ancona \cite{Ancona88} when $\mu$ is finitely supported. 

It is important to point out that the Poisson boundary is an invariant of the \emph{pair} $(G, \mu)$ and as such it may vary greatly for different measures even on the same group. In particular, by the work of Kaimanovich-Vershik \cite{K-Vershik83} and Derriennic \cite{Der80} the triviality of the boundary has been linked to the vanishing 
of the \emph{asymptotic entropy}. This \emph{entropy criterion} has been then extended by Kaimanovich, who formulated 
geometric criteria (the \emph{ray criterion} \cite{K85} and the \emph{strip criterion} \cite{K00}) to identify the Poisson boundary. These techniques have been widely applied 
to many types of groups, such as e.g. hyperbolic and relatively hyperbolic groups, lattices in Lie groups, lamplighter groups, and more recently mapping class groups, or the group of outer automorphisms of the free group (see \cite{Erschler2010} for a survey).

However, all these results are based on the classical hypotheses for the application of the strip criterion, namely that the measure $\mu$ has 
\emph{finite entropy} and \emph{finite logarithmic moment}. In this paper, we will go beyond such restrictions for random walks on the \emph{free semigroup}. 

In fact, even though the free semigroup is arguably the simplest possible case, it is still an open conjecture that 
\emph{the Poisson boundary for \textup{any} generating measure on the free semigroup can be identified with the space of infinite words}. 

In recent work, Kaimanovich and the first author \cite{BK2013} have proved the conjecture for the free semigroup in the case when $\mu$ has finite logarithmic moment, without any assumption on the entropy. 

In this paper, we will extend these results to a much larger class of measures $\mu$ on a free semigroup $\Sigma$ of finite or countable rank. One of our main results is the following. 

\begin{theorem}\label{thm:main either or}
Let $\mu$ be a generating measure on a free semigroup $\Sigma$ of finite or countable rank, and let $(\p \s, \lambda)$ denote the space of infinite words 
in the generators, with the hitting measure for the random walk. If $\mu$ has {\bf either} finite entropy {\bf or} finite logarithmic moment, then $(\p \s,\la)$ is the Poisson boundary of the random walk $(\s,\mu)$.
\end{theorem}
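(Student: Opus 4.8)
The plan is to prove the two cases separately, since they rely on different mechanisms, and then observe that their union gives the theorem. In both cases the setup is the same: the free semigroup $\Sigma$ acts on the space $\partial\Sigma$ of infinite words, a sample path $g_1, g_1g_2, g_1g_2g_3, \dots$ converges almost surely to an infinite word $\bnd(\omega) \in \partial\Sigma$ because left-multiplication in a free semigroup only prepends letters and never cancels, and the distribution of $\bnd(\omega)$ is the hitting measure $\lambda$. This makes $(\partial\Sigma, \lambda)$ a $\mu$-boundary, i.e. a quotient of the Poisson boundary, so the whole content of the theorem is to show that this quotient is in fact the full Poisson boundary — equivalently, that the tail $\sigma$-algebra is generated (mod $0$) by the boundary map $\bnd$.

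First I would dispose of the finite-entropy case. Here I would invoke the classical entropy criterion of Kaimanovich--Vershik and Derriennic: a $\mu$-boundary $(B,\nu)$ equals the Poisson boundary if and only if the Furstenberg (differential) entropy of $(B,\nu)$ equals the asymptotic entropy $h(\mu)$ of the random walk. So I would compute the boundary entropy of $(\partial\Sigma, \lambda)$ and show it agrees with $h(\mu)$. Because of the free-semigroup structure the boundary map is essentially injective on the relevant data — the position at time $n$ determines a prefix of length growing linearly, and the Radon--Nikodym cocycle of the action on $\partial\Sigma$ is transparent — so the entropy of the boundary is controlled by the rate at which the known prefix grows. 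The key point is that no information about the trajectory is lost in passing to $\partial\Sigma$: the only ambiguity in reconstructing $g_1 g_2 \cdots g_n$ from the limiting word is the splitting point, and the Shannon--McMillan--Breiman argument shows this contributes no entropy deficit. Finite entropy guarantees all these quantities are finite and the criterion applies cleanly.

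The more delicate case is finite logarithmic moment without any entropy assumption; I expect this to be the main obstacle, since the entropy criterion is unavailable (the entropy may be infinite) and the strip criterion in its usual form also requires finite entropy. My plan is to follow the strategy of Kaimanovich and the first author in \cite{BK2013}, which handles exactly this regime. The idea is a direct, entropy-free argument using the finite logarithmic moment to control the speed of escape: one shows via a Borel--Cantelli argument that the length $|g_1 \cdots g_n|$ of the position grows without bound (in fact the finite logarithmic moment yields $|g_1\cdots g_n| \to \infty$ almost surely at a controlled rate), so that for almost every pair of sample paths the longest common prefix of their limiting words eventually stabilizes and the boundary map separates tail-equivalent paths down to a measure-zero set. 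Concretely, one verifies that any bounded tail-measurable function is measurable with respect to $\bnd$ by approximating it through the conditional walk and using the logarithmic moment to bound the relevant error terms.

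The technical heart of the second case is establishing that the finite $w$-logarithmic moment — rather than merely the finite logarithmic moment in the word metric — still forces sufficiently fast convergence along the tree of words; this is where one must carefully track which letters are prepended and bound the probability that the prefix fails to grow, exploiting that in a free semigroup a single long generator $g_{n+1}$ appearing in the product simply extends the prefix by its entire length with no backtracking. Once this stabilization is in hand, the conclusion that the tail $\sigma$-algebra coincides with $\bnd^{-1}(\text{Borel on }\partial\Sigma)$ modulo $\lambda$-null sets follows by a standard $0$--$1$-law style argument. I would then combine both cases: since \textbf{either} hypothesis suffices, and the conclusion is identical, the theorem follows by taking the union of the two situations.
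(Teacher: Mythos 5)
Your proposal has a genuine gap, and it sits exactly where the paper's single unifying idea should be: the length homomorphism $\phi:\Sigma\to\mathbb{N}$, $\phi(g)=|g|$. In your finite-entropy half you correctly isolate the key geometric fact (conditioned on the limiting infinite word, the only ambiguity in recovering $x_n$ is the splitting point along that word), but you then dismiss this ambiguity with ``the Shannon--McMillan--Breiman argument shows this contributes no entropy deficit.'' That dismissal is the entire theorem. The splitting point $|x_n|$ is itself a random walk on the abelian semigroup $\mathbb{N}$ with step distribution $\mu_\phi=\phi_\star\mu$, and the statement that its entropy $H(\mu_\phi^{\star n})$ grows sublinearly is precisely the Kaimanovich--Vershik/Derriennic zero-entropy theorem for abelian (semi)groups (Theorem \ref{thm:abelian zero entropy}), applicable because $H(\mu_\phi)\le H(\mu)<\infty$; SMB by itself gives nothing here. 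One also needs a device converting ``conditional entropy given the boundary is $o(n)$'' into maximality: the paper does this through the conditional (leafwise) random walks, the Kaimanovich--Sobieczky relative entropy criterion (Theorem \ref{thm:KS}), and Lemma \ref{lem:maximality} (a $\mu$-boundary is the Poisson boundary iff almost all conditional walks have trivial boundary). Your appeal to the Furstenberg-entropy form of the Kaimanovich--Vershik criterion could plausibly substitute for this in the finite-entropy case, but you never supply the sublinearity input it needs.

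The logarithmic-moment half is where the proposal actually fails. Your plan --- Borel--Cantelli control of the speed of escape, growth of $|x_n|$, stabilization of common prefixes --- attacks a non-problem: in a free semigroup $|x_n|\ge n$ deterministically, convergence of sample paths to $\partial\Sigma$ is automatic, and ``the boundary map separates tail-equivalent paths'' is not what maximality means (the danger is that the tail $\sigma$-algebra is strictly larger than the boundary $\sigma$-algebra, not that the boundary map fails to separate points). Since you concede that the entropy criterion is unavailable when $H(\mu)=\infty$, the phrase ``bound the relevant error terms'' carries the entire burden of the proof, and no mechanism is offered; you also drift into the $w$-logarithmic moment, which belongs to Theorem \ref{thm:finite w norm}, not to this statement. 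The paper's route makes this case a two-line corollary of the first: $L(\mu_\phi)=L(\mu)<\infty$ because $\phi$ preserves length, and on $\mathbb{N}$ finite logarithmic moment forces finite entropy, $H(\theta)\le 2L(\theta)+c$ (Lemma \ref{lem : finite entropy}, proved by splitting $\mathbb{N}$ according to whether $\theta(n)\ge n^{-2}$). Hence $\mu_\phi$ has finite entropy and the same projection criterion (Theorem \ref{thm:main homomorphism}) applies to both cases at once; crucially, the relative-entropy machinery only ever requires $H(\mu_\phi)<\infty$, never $H(\mu)<\infty$, which is exactly why it survives infinite entropy. Without recognizing some version of this reduction, your second case remains unproven.
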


As we mentioned, the strip approximation cannot be used in this context, hence we develop different tools. 
The main idea is that the distance from the identity yields a projection $\Sigma \to \mathbb{N}$, and random walks on $\mathbb{N}$ have 
trivial boundary. This can be used, by framing the problem in terms of  random walks on equivalence classes, to prove that the relative entropy of the original walk is zero almost surely. 

In order to state the second main result, let us fix a finite word $w \neq e$ in $\Sigma$. We define the $w$-norm $|g|_w$ of an element $g \in \Sigma$ 
as the number of subwords of $g$ which are equal to $w$ (see Section \ref{S:w} for the precise definition). We say that the measure $\mu$ has finite logarithmic $w$-moment if $\int_\Sigma \log |g|_w \ d \mu(g) < \infty$.
The second main result is the following.

\begin{theorem}\label{thm:finite w norm}
Suppose that there exists a word $w \neq e$ in $\Sigma$ such that $\mu$ has finite logarithmic $w$-moment. Then, the space $(\p \s, \la)$ of infinite words is the Poisson boundary of the random walk $(\s,\mu)$.
\end{theorem}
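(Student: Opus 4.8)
The plan is to realize $(\p\s,\la)$ as the full Poisson boundary by showing that the entropy of the random walk \emph{relative} to $\p\s$ vanishes almost surely, exactly as in the proof of Theorem~\ref{thm:main either or}. Since $\s$ is free, the position $w_n = x_1\cdots x_n$ after $n$ steps is obtained from $w_{n-1}$ by appending the increment $x_n$, so the trajectory converges to the infinite word $\xi = x_1 x_2 x_3 \cdots$; thus $(\p\s,\la)$ is a $\mu$-boundary and it suffices to prove it is maximal. Conditioning on $\xi$, the only data not already recorded by the boundary is the sequence of cut points $0 = L_0 < L_1 < L_2 < \cdots$ marking where each increment ends (here $L_n = |w_n|$), so maximality is equivalent to the triviality of the tail of this cut-point process for $\la$-a.e.\ $\xi$, i.e.\ to the vanishing of the relative entropy.

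First I would record the almost-additivity of the $w$-norm: for all $g, h \in \s$,
\[
|g|_w + |h|_w \ \le\ |gh|_w \ \le\ |g|_w + |h|_w + (|w|-1),
\]
since the only occurrences of $w$ in $gh$ not contained in $g$ or in $h$ are the at most $|w|-1$ occurrences straddling the junction. Consequently $N_n := |w_n|_w$ is, up to per-step corrections bounded by $|w|-1$, a sum of the i.i.d.\ contributions $|x_k|_w$, hence it behaves like a random walk on $\N$. Being a walk on an abelian (in particular amenable) semigroup, its Poisson boundary is trivial, so it contributes no relative entropy. The map $g \mapsto |g|_w$ is the projection $\s \to \N$ that will play the role taken by the length $g \mapsto |g|$ in the finite--logarithmic--moment case.

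The heart of the argument is then an entropy comparison showing that this projection captures all of the relative entropy, so that the relative entropy of $(\s,\mu)$ over $\p\s$ is dominated by that of the projected $\N$-walk, which is zero. Concretely, I would decompose the conditional information of an increment given the boundary into (i) the information of its $w$-count given the $\N$-boundary, which vanishes by triviality of the boundary of the $\N$-walk, and (ii) the residual information of the increment given its $w$-count and the boundary word $\xi$. The hypothesis $\int_\s \log |g|_w \, d\mu(g) < \infty$ is precisely what controls term (ii): it guarantees that the entropy carried by the $w$-counts is finite and summably controlled, so that via the ergodic theorem and a Borel--Cantelli estimate the residual contribution is negligible almost surely. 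This is the step where the weaker moment hypothesis of the present theorem replaces the finite logarithmic moment, and where one must work along almost every trajectory rather than in expectation.

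The main obstacle is that the asymptotic entropy of $(\s,\mu)$ may be infinite, so the classical finite--entropy criterion of Kaimanovich--Vershik is unavailable and one cannot manipulate the relevant entropies as finite numbers. I expect the difficulty to concentrate in step (ii) above: the fibers of $g \mapsto |g|_w$ are large --- knowing $N_n$ is far from knowing the cut point $L_n$, since $w$-occurrences are sparse --- so one must prove that, conditionally on $\xi$, the ambiguity in the cut points lying between consecutive $w$-occurrences does not produce a nontrivial tail. Handling this requires the almost-sure, relative version of the entropy argument together with the control on $w$-counts furnished by the finite logarithmic $w$-moment, and the bounded straddling error in the almost-additivity must be absorbed so that the $\N$-walk comparison remains valid.
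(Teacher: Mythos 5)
Your framework is the same as the paper's (realize $(\p\s,\la)$ as a $\mu$-boundary, pass to the conditional walks, and show the relative asymptotic entropy vanishes via the Kaimanovich--Sobieczky criterion), but there is a genuine gap, and it sits exactly at the point you yourself flag as the difficulty. For the \emph{original} walk $(\s,\mu)$, the $w$-count projection does not capture the conditional entropy: distinct prefixes of a boundary word $\g$ can have the same $w$-count (e.g.\ all prefixes inside a long block of $\g$ containing no occurrence of $w$), so your term (ii) --- the residual information of the position $x_k$ given $N_k$ and $\g$ --- is genuinely nonzero, and in the regime the theorem is designed for (where $H(\mu)$ and $L(\mu)$ may both be infinite) it can be infinite already at $k=1$. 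Nothing in your outline controls it: finite logarithmic $w$-moment bounds the entropy of the \emph{counts} (via Lemma~\ref{lem : finite entropy}), not the conditional entropy of the cut points given the counts, and ``ergodic theorem plus Borel--Cantelli'' is not a mechanism here. Note also that your inference ``the projected process lives on $\mathbb{N}$, hence has trivial boundary, hence contributes no relative entropy'' is precisely the shortcut the paper warns against in its remark on measurable partitions: triviality of the projected boundary alone does not permit exchanging the wedge and the join of partitions; one needs an entropy argument (this particular point is patchable in your setting, since the counts do have finite entropy by Lemma~\ref{lem : finite entropy}, but the large-fibers problem is not). Finally, without repairing (ii) you cannot even invoke Theorem~\ref{thm:KS}, whose hypothesis $H_1<\infty$ may fail for the original walk.

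The missing idea, which is the heart of the paper's proof, is to change the walk before projecting: after replacing $\mu$ by a convolution power so that $\mu(w)>0$, one induces on the stopping time $\tau_w$ (first time $w$ appears as an increment), producing the first-return measure $\mu_w$. The induced walk has the same hitting measure and, by Proposition~\ref{pro:coincidence}, the same Poisson boundary, so it suffices to prove maximality for $(\s,\mu_w)$. The gain is structural: every position of the $\mu_w$-walk ends with the suffix $w$, so along a fixed infinite word $\g$ two positions with the same $w$-count must coincide --- this makes your term (ii) identically zero, rather than something to be estimated. The moment hypothesis is transferred to $\mu_w$ by the Abramov-type formula (Proposition~\ref{theo:finite moment} applied to $F(g)=\log(1+|g|_w)$, using $E(\tau_w)=1/\mu(w)<\infty$ from Lemma~\ref{lem: finite w}), and then the chain $H_k(\q^{\g})\le 2L_k(\q^{\g})+c$ (Derriennic's bound), integration over $\g$, and the stationarity lemma (Lemma~\ref{lem:stationary}) give $h=\lim_k H_k/k=0$, whence Theorem~\ref{thm:trivial conditional bnd} concludes. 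In short: your proposal mimics the paper's first criterion (length projection) with $|\cdot|_w$ in place of $|\cdot|$, but the length of a prefix determines the prefix of $\g$ while the $w$-count does not; the stopping-time induction is what restores that injectivity, and it is absent from your argument.
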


Such a criterion can be quite flexible, as we will show in the next few examples, which 
were not available to the older techniques.

\begin{example}
Let $\Sigma = \langle a, b \rangle $ be a free semigroup of rank $2$ with generators $a, b$, and let us consider the probability measure $\mu$ defined as
$$\begin{array}{ll} 
\mu(a^{2^k}) = \frac{c}{k^2} & \textup{for }k \geq 1 \\
\mu(b) = \frac{1}{2}  
\end{array}$$
where $c$ is a constant such that the total measure is $\frac{1}{2} + \sum_{k=1} \frac{c}{k^2} = 1$. This measure clearly has infinite logarithmic moment, as 
$\sum_k \frac{\log(2^k)}{k^2} = \sum_k \frac{\log 2}{k} = + \infty$. However, it has finite entropy, so we can obtain the Poisson boundary as a Corollary of Theorem \ref{thm:main either or}. 
\end{example}

However, there are random walks on a free semigroup whose both entropy and logarithmic moment are not finite, as in the following case.
\begin{example}
Let $\Sigma = \langle a, b \rangle $ be a free semigroup of rank $2$ with generators $a, b$, and let us consider the probability measure $\mu$ defined as
$$\begin{array}{ll} 
\mu(a^{2^k}) = \frac{c}{k^2} & \textup{for }k \geq 1 \\
\mu(a^{3^k})=d_k &  \textup{for }k \geq 1 \\
\mu(b) = \frac{1}{2} .
\end{array}$$
We choose $d_k>0$ such that $\sum_kd_k\log{d_k}=-\infty$ and $\sum_k (d_k+\frac{c}{k^2})=\frac{1}{2}$. Therefore, both  entropy and the logarithmic moment are infinite, hence Theorem \ref{thm:main either or} cannot be applied.
However, if we take $b = w$, then it is extremely easy to see that the logarithmic $b$-norm of $\mu$ is finite, hence we can identify the Poisson boundary by using Theorem \ref{thm:finite w norm}.
\end{example}

Note that one can use this result for many distributions with arbitrarily ``fat" tails, by choosing carefully the word $w$. For instance:

 \begin{example}
 Let $\Sigma = \langle a, b \rangle $, and consider \emph{any} sequences $\{c_k\}_{k \geq 1}, \{d_k \}_{k \geq 1}$ of positive numbers such that $\sum_k c_k + \sum_k d_k = 1$. Consider 
 the measure $\mu$ defined as 
 $$\begin{array}{ll} 
\mu(a^{k}) = c_k \\
\mu(b^{k})=d_k.
\end{array}$$
Then, the Poisson boundary of $(\Sigma, \mu)$ is the same as the Poisson boundary of $(\Sigma, \mu^2)$. Moreover, the word $w = ab$ 
lies in the support of $\mu^2$, and each element in the support of $\mu^2$ contains $ab$ at most once, hence the $w$-moment of $\mu^2$ is 
finite. Hence, the Poisson boundary of $(\Sigma, \mu)$ is the space of infinite words.
 \end{example}
 
 \subsection{Structure of the paper}
In Sections \ref{S:not}, \ref{S:mub}, and \ref{S:entro} we will recall the basic definitions about boundaries of random walks, as well as the definition of random walks on equivalence classes and the criteria for boundary triviality. The proof of Theorem \ref{thm:main either or} is contained in section \ref{S:main1}. Then in section \ref{S:stop} we will discuss stopping times, and we will use them in section \ref{S:main2} to prove Theorem \ref{thm:finite w norm}. 

\subsection{Acknowledgments} 
We would like to thank L. Bowen and V. Kaimanovich for fruitful discussions. G. T. is partially supported by NSERC and the Connaught fund.

\section{Notation and background material} \label{S:not}
Let $W$ be a finite or countable non-empty set, and $\s$ be the semigroup with identity freely generated by $W$. 
Thus, every element of $\s$ can be written uniquely as $g = w_1 w_2 \dots w_n$ where $w_i \in W$, with the degenerate case of the identity element $e$ which is represented by the empty word. 
Each element of $\s$ is called a  \emph{finite word}, and the number $n$ is called the \emph{word metric (length)} of $g$ and is denoted by $|w|$.
If $x=w_1\cdots w_n$ and $y=w_1\cdots w_m$ are finite words and $m\geq n$, then we also define $x^{-1}y:=w_{n+1}\cdots w_m$, and $e^{-1}=e$. 

Let  $\mu$ be a probability measure on $\s$. 
We will assume that $\mu$ is \emph{generating}, i.e. the semigroup with identity generated by the support of $\mu$ equals $\s$.
Note that this is not a restrictive condition, as if $\mu$ is not generating, then the semigroup $\Sigma^+$ generated by the support $\mu$ is also free, 
hence one can just replace $\Sigma$ by  $\Sigma^+$.
Let us denote by $\mu^{\star k}$ the $k^{th}$ fold convolution of $\mu$, that is for  any finite word $w$,
$$
\mu^{\star k}(w)=\sum_{g_1\cdots g_k=w}\mu(g_1)\cdots\mu(g_k).
$$
For  a finite word $w$ in $\s$ define the transition probability 
$$
p(w,wg):=\mu(g).
$$
The Markov process associated with $p$ is called \emph{random walk} $(\s,\mu)$.
Let $\s^\N$ be the set of infinite sequences of elements of $\s$, which is equipped with the product measure $\mu^\N$.  The probability space $(\s^\N,\mu^\N)$ is called the \emph{space of increments} for the random walk $(\s,\mu)$.  Let $\Omega=\s\times\s^\N$ and for any $g \in \s$ define the map
$$
\begin{array}{c}
\s^\N\to\Omega\\
\{g_n\}_{n\geq1}\to \{x_n\}_{n\geq0}
\end{array}
$$
where $x_0:=g$ and $x_n=x_0g_1\cdots g_n$ for $n\geq 1$. The $\s$--valued map $x_n$ is called the \empty{position of random walk} at time $n$. The image of the probability measure $\mu^\N$ under the preceding map is denoted by $\pp_g$. The probability space $(\Omega,\pp_g)$ is called the \emph{space of sample paths} started from $g$. 
Let us also denote as  $U : \Sigma^\mathbb{N} \to \Sigma^\mathbb{N}$ the shift on the space of increments.

\subsection{Poisson boundary}
Let $m$ be a probability measure supported on $\s$, that is $m(g)>0$ for any $g$ in $\s$. 
Let us define 
$$\pp_m=\sum_gm(g)\pp_g .$$  
We say two sample paths $\{x_n\}_{n\geq0}$ and $\{y_n\}_{n\geq0}$ are equivalent whenever they coincide after  finite time shifts; more precisely, if there are two integers $i$ and $j$ such that $x_{n+i}=y_{n+j}$ for $n\ge 0$. Consider the $\sigma$-algebra $\mathcal{A}$ of all measurable unions of these equivalence classes (mod 0) with respect to probability measure $\pp_m$. By Rokhlin's theory of Lebesgue spaces \cite{Rokhlin52}, there exist a unique (up to isomorphism) measurable space $\Gamma = \Gamma(\s, \mu)$ and a measurable map $\boldsymbol{bnd}:\s^{\Bbb N}\to \Gamma$, called the \emph{boundary map}, such that the $\sigma$-algebra $\mathcal{A}$ coincides (mod 0) with the $\sigma$-algebra of $\boldsymbol{bnd}$-preimages of measurable subsets of $\Gamma$.

\begin{definition}
Let $\pp:=\pp_e$ be the probability measure on the sample space with respect to the random walk $(\s,\mu)$ with initial distribution concentrated on the identity element $e$ of $\s$.  The probability space $(\Gamma,\nu)$ is called the \emph{Poisson boundary} of the random walk $(\s,\mu)$, where $\nu=\boldsymbol{bnd}_\star(\pp)$ is the image of the probability measure $\pp$ under the measurable map $\boldsymbol{bnd}$, which is called the \emph{harmonic measure}.
\end{definition}

\subsection{Harmonic functions}
Let $f : \Sigma \to \mathbb{R}$ be a bounded real-valued function.  For any finite word $g$ in $\s$, let us define the action of $\mu$ on $f$ as 
$$
\mu \cdot f (g) := \sum_y\mu(y)f(gy).
$$
A bounded function is called $\mu$--\emph{harmonic} if it is invariant under the action of $\mu$, that is $f=\mu\cdot f$. Let us denote the space of all bounded $\mu$--harmonic function as $H^\infty(\mu)$, which is a Banach space when is equipped with the  supremum norm. One can show that the Poisson boundary $(\Gamma, \nu)$ is related to bounded harmonic functions via the \emph{Poisson representation formula} (see e.g. \cite{Fu70}), which establishes 
an isometric isomorphism between $H^\infty(\mu)$ and $L^\infty(\Gamma, \nu)$. 
More precisely, when $\hat{f}$ is in $L^\infty(\Gamma,\nu)$ then $f(g)=\int\hat{f} \ dg\la$ is a bounded $\mu$--harmonic function.  When $f$ is in $H^\infty(\mu)$, then
$\hat{f}(\bnd(\x))=\lim_nf(x_n)$ exists for almost every sample path $\x=\{x_n\}_{n\geq0}$ and belongs to $L^\infty(\Gamma,\nu)$. These two maps are inverses to each other and preserve norms, establishing the isomorphism.

Finally, a positive harmonic function $f : \Sigma \to \mathbb{R}$ is \emph{minimal} if any positive harmonic 
function $g$ such that $f(x) \geq g(x)$ for every $x$ must be a scalar multiple of $f$.

\section{$\mu$--boundaries and conditional random walks} \label{S:mub}

A probability space $(B, \lambda)$ obtained by taking the quotient of the Poisson boundary with respect to a $\s$--invariant measurable partition is called a
\emph{$\mu$--boundary}. Let us denote the corresponding quotient map by 
$$
\Lambda:\Gamma \to B.
$$

Let $(B, \lambda)$ be a $\mu$-boundary. For each $\xi \in B$, the \emph{conditional random walk} associated with $\xi$ is defined as the Markov process on $\s$ 
with transition probabilities
\begin{equation} \label{E:cond}
p^\xi(x,xg) = \mu(g) \frac{d xg \lambda}{d x \lambda}(\xi).
\end{equation}
Denote by $\pp^\xi$ the probability measure on the space of sample paths with respect to the Markov process associated to $p^\xi$.
One should think of this process as the random walk conditioned to hitting the boundary at $\xi$.
For each $\xi$, the \emph{relative Poisson boundary} is the Poisson boundary of the Markov process $p^\xi$. By disintegration, we can write
\begin{equation}\label{eq:disintegration}
\pp=\int_{B}\pp^\xi \ d\lambda(\xi).
\end{equation}

We now recall two important lemmas which link minimal harmonic functions, Poisson boundary, and the conditional random walks. 
They are probably well-known, but we provide proofs for completeness.

\begin{lemma}
Let $(B, \lambda)$ be a $\mu$-boundary of the random walk $(\s, \mu)$. Then: 
\begin{enumerate}
\item for $\lambda$-almost every $\xi \in B$, the function 
$$u^\xi(g) := \frac{ d g \lambda}{ d \lambda}(\xi)$$
is harmonic;
\item the Poisson boundary for the conditional random walk $p^\xi$ is trivial if and only if the function $u^\xi$ is minimal harmonic.
\end{enumerate}
\end{lemma}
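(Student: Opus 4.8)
The plan is to treat both parts through the single observation that the conditional kernel $p^\xi$ is the Doob $h$-transform of the original walk with $h := u^\xi$, so that part (1) supplies the harmonic function powering the transform and part (2) reads off the boundary from the classical minimality--triviality dictionary. For part (1) the starting point is the stationarity of the hitting measures, namely $g\lambda = \sum_{y}\mu(y)\,gy\lambda$ for every $g\in\Sigma$, which comes from conditioning the walk started at $g$ on its first increment $y$. First I would record that, since $\mu$ is generating and $\lambda=\mu\star\lambda$, every translate $g\lambda$ is absolutely continuous with respect to $\lambda$: iterating the relation gives $\lambda=\sum_g\mu^{\star n}(g)\,g\lambda$, and each $g$ lies in the support of some $\mu^{\star n}$. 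Hence $u^\xi(g)=\frac{dg\lambda}{d\lambda}(\xi)$ is well defined, nonnegative and finite $\lambda$-a.e., with $u^\xi(e)=1$. Taking Radon--Nikodym derivatives of both sides of the stationarity relation against $\lambda$, and using that the density of a countable sum of nonnegative measures all $\ll\lambda$ is the sum of the densities (Tonelli / monotone convergence), yields $u^\xi(g)=\sum_y\mu(y)\,u^\xi(gy)$ for $\lambda$-a.e. $\xi$.

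To upgrade ``for each $g$, a.e.\ $\xi$'' to ``a.e.\ $\xi$, for all $g$'' I would invoke countability of $\Sigma$: for fixed $g$ the equation fails only on a $\lambda$-null set $N_g$, and outside the countable union $N=\bigcup_g N_g$ all the defining equations hold simultaneously, so $u^\xi$ is harmonic for every $\xi\notin N$.

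For part (2), the chain rule $\frac{dxg\lambda}{dx\lambda}(\xi)=u^\xi(xg)/u^\xi(x)$ rewrites the definition as $p^\xi(x,xg)=\mu(g)\,\frac{u^\xi(xg)}{u^\xi(x)}$, which is precisely the $h$-transform of $p(x,xg)=\mu(g)$ by $h=u^\xi$ (harmonicity of $h$ is exactly what makes $p^\xi$ a Markov kernel). Working on the forward-invariant set $\{h>0\}$ containing $e$, the engine of the argument is the elementary bijection $g\mapsto g/h$ between the cone of positive $p$-harmonic functions and the cone of positive $p^\xi$-harmonic functions: a direct computation shows $g$ is $p$-harmonic iff $g/h$ is $p^\xi$-harmonic, with inverse $\psi\mapsto\psi h$, and it matches domination with boundedness, since $g\le Ch$ iff $g/h\le C$.

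With this dictionary the equivalence is formal. If $h$ is minimal, every positive $p$-harmonic $g\le h$ equals a multiple of $h$, so every positive $p^\xi$-harmonic function bounded by $1$ is constant; writing an arbitrary bounded $p^\xi$-harmonic $\phi$ with $\|\phi\|_\infty=M$ as $\phi=2M\psi-M$, where $\psi=(\phi+M)/(2M)$ is positive, $p^\xi$-harmonic and valued in $[0,1]$, shows every bounded $p^\xi$-harmonic function is constant, i.e. the Poisson boundary of $p^\xi$ is trivial. Conversely, triviality forces every positive $p^\xi$-harmonic function bounded by $1$ to be constant, which through the bijection says every positive $p$-harmonic $g\le h$ is a scalar multiple of $h$, i.e. $h$ is minimal. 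I expect the main obstacle to lie in part (2)'s bookkeeping: verifying that $g\mapsto g/h$ is genuinely a bijection of cones (surjectivity via $\psi\mapsto\psi h$), that boundedness corresponds exactly to domination, and that triviality of the Poisson boundary of $p^\xi$ is equivalent to constancy of all bounded $p^\xi$-harmonic functions, which is the Poisson representation recalled earlier. In part (1) the only delicate point is the interchange of the infinite sum with the Radon--Nikodym derivative, which nonnegativity settles.
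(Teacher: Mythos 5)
Your proof is correct and takes essentially the same route as the paper: part (1) via stationarity $\lambda=\sum_y \mu(y)\, y\lambda$, translation by $g$, and Radon--Nikodym differentiation, and part (2) via the Doob-transform dictionary that $f$ is $p^\xi$-harmonic if and only if $f u^\xi$ is $\mu$-harmonic, matching boundedness with domination by $u^\xi$. You additionally fill in details the paper leaves implicit (absolute continuity of $g\lambda$ with respect to $\lambda$, the countable union of null sets, and the affine shift handling non-positive bounded $p^\xi$-harmonic functions), but the core argument is identical.
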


\begin{proof}
(1) Since $\lambda$ is a stationary measure, then 
$$ \lambda = \sum_{h \in \s} \mu(h) \ h\lambda.$$
Hence, by acting by $g$ on both sides we get  
$$ g\lambda = \sum_{h \in \s} \mu(h) \ gh\lambda$$
hence, taking the Radon-Nykodym derivative with respect to $\lambda$ we get for almost every $\xi \in B$
$$ u^\xi(g) =  \frac{d g\lambda}{d \lambda}(\xi) = \sum_{h \in \s} \mu(h) \ \frac{d (gh\lambda)}{d\lambda}(\xi) = \sum_{h \in \s} \mu(h) \ u^\xi(gh)$$
hence $u^\xi$ is harmonic.

(2) Recall that a function $f$ is $p^\xi$-harmonic if and only if for each $g$ 
$$f(g) = \sum_h \mu(g^{-1} h) \frac{dgh\lambda}{dg\lambda}(\xi) f(gh) =  \sum_h \mu(g^{-1} h) \frac{u^\xi(gh)}{u^\xi(g)} f(gh)$$
which implies 
$$f(g) u^\xi(g) = \sum_h \mu(g^{-1} h)  f(gh) u^\xi(gh)$$
Hence, $f$ is $p^\xi$-harmonic if and only if $v(g) = f(g) u^\xi(g)$ is $\mu$-harmonic. 
Thus, let us assume that the Poisson boundary of $p^\xi$ is trivial, and let $v$ be a $\mu$-harmonic function such that $v \leq u^\xi$. 
Then by the above observation the function $f(g) = \frac{u^\xi(g)}{v(g)}$ is $p^\xi$-harmonic and bounded, hence it must be constant. 
Thus, $u^\xi = c v$, so $u^\xi$ is minimal. Conversely, if $u^\xi$ is minimal, then for each function $f$ which is bounded and $p^\xi$-harmonic, 
the function $v = f u^\xi$ is $\mu$-harmonic and bounded above by a multiple of $u^\xi$, hence $v = c u^\xi$ and $f$ is constant. 
\end{proof}

\begin{lemma}\label{lem:maximality}
A $\mu$--boundary $(B,\lambda)$ is the Poisson boundary if and only if the Poisson boundaries of the conditional random walks are almost surely trivial.
\end{lemma}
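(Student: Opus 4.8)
The plan is to realize the Poisson boundary $(\Gamma,\nu)$ as a bundle over the $\mu$-boundary $(B,\lambda)$ whose fibers are exactly the relative Poisson boundaries, and then to read off the equivalence from this fiberwise structure. Since $(B,\lambda)$ is a $\mu$-boundary, the quotient map $\Lambda:\Gamma\to B$ satisfies $\Lambda_\star\nu=\lambda$, so by Rokhlin's disintegration we may write $\nu=\int_B\nu_\xi\,d\lambda(\xi)$ with $\nu_\xi$ supported on the fiber $\Lambda^{-1}(\xi)$. The crucial observation is that $(B,\lambda)$ is the Poisson boundary precisely when $\Lambda$ is an isomorphism mod $0$; as $\Lambda$ is already a measure-preserving quotient, this happens if and only if $\Lambda$ is injective mod $0$, i.e. if and only if $\nu_\xi$ is a single atom for $\lambda$-almost every $\xi$. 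Thus everything reduces to identifying the relative Poisson boundary of $p^\xi$ with the fiber $(\Lambda^{-1}(\xi),\nu_\xi)$.

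To carry this out, I would first check that $p^\xi$ is a genuine Markov kernel: summing \eqref{E:cond} over $g$ and using the stationarity identity $x\lambda=\sum_g\mu(g)\,xg\lambda$ (obtained by acting by $x$ on $\lambda=\sum_h\mu(h)h\lambda$) gives $\sum_g p^\xi(x,xg)=\frac{d x\lambda}{d x\lambda}(\xi)=1$. Hence $p^\xi$ is the Doob transform of the original walk by the harmonic function $u^\xi(g)=\tfrac{dg\lambda}{d\lambda}(\xi)$ of the previous lemma, and as such it is the law of the walk \emph{conditioned} on the event $\{\Lambda\circ\bnd=\xi\}$. I would make this precise by verifying that $\{\pp^\xi\}$ is the disintegration of $\pp$ over $\Lambda\circ\bnd$, namely that $\pp=\int_B\pp^\xi\,d\lambda(\xi)$ — which is exactly \eqref{eq:disintegration} — and that $\pp^\xi$ is concentrated on sample paths $\x$ with $\Lambda(\bnd(\x))=\xi$. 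Together these say that $\pp^\xi=\pp(\,\cdot\mid \Lambda\circ\bnd=\xi)$.

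With this in hand I would identify the boundaries. The Poisson boundary of such a Markov chain is the space of ergodic components of its path measure under the time-shift equivalence, and for $\pp$ this space is realized by $\bnd$ with target $(\Gamma,\nu)$. Restricting to the conditional measure $\pp^\xi$, the boundary map $\bnd$ still generates the invariant $\sigma$-algebra, but now takes values in the fiber $\Lambda^{-1}(\xi)$ with distribution $\nu_\xi$; hence the Poisson boundary of $p^\xi$ is $(\Lambda^{-1}(\xi),\nu_\xi)$. I expect this to be the main obstacle: one must show that conditioning commutes with passing to the boundary, i.e. that the invariant $\sigma$-algebra of $\pp^\xi$ is the restriction to $\{\Lambda\circ\bnd=\xi\}$ of the invariant $\sigma$-algebra of $\pp$, rather than something strictly larger. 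This is where the Lebesgue-space formalism and Rokhlin disintegration do the real work, together with the fact that $\bnd$ already exhausts the invariant $\sigma$-algebra of $\pp$.

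Finally I would assemble the equivalence. By the previous step the relative Poisson boundary of $p^\xi$ is trivial for $\lambda$-a.e. $\xi$ if and only if $\nu_\xi$ is a single atom for $\lambda$-a.e. $\xi$, which by the reduction in the first paragraph holds if and only if $\Lambda$ is an isomorphism mod $0$, i.e. if and only if $(B,\lambda)$ is the Poisson boundary. As a consistency check, combining this with part (2) of the previous lemma recovers the classical statement that $(B,\lambda)$ is maximal exactly when the functions $u^\xi$ are minimal harmonic for $\lambda$-a.e. $\xi$.
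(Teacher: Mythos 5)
Your framing is sound, and it is worth noting that if your plan were completed it would actually give both directions of the equivalence at once (the paper only writes out the ``if'' direction). The reduction in your first paragraph is correct: $(B,\lambda)$ is the Poisson boundary iff $\Lambda$ is injective mod $0$ iff the fiber measures $\nu_\xi$ are point masses a.e.; and your second paragraph (Doob transform, $\pp=\int_B\pp^\xi\,d\lambda(\xi)$) is exactly the paper's equation \eqref{eq:disintegration}. The problem is the step you yourself flag as ``the main obstacle'': that the Poisson boundary of $p^\xi$ \emph{is} the fiber $(\Lambda^{-1}(\xi),\nu_\xi)$, equivalently that $\bnd$ still generates the invariant $\sigma$-algebra after conditioning. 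This is not a bookkeeping step that ``Lebesgue-space formalism and Rokhlin disintegration'' can supply; it is essentially the entire content of the lemma, and your proposal leaves it unproved. The obstruction is that mod-$0$ statements do not pass to conditional measures: $\sigma(\bnd)$ exhausts the invariant $\sigma$-algebra $\mathcal{A}$ \emph{mod $\pp$}, but $\pp^\xi$ is singular with respect to $\pp$ (it lives on the null fiber $\{\Lambda\circ\bnd=\xi\}$), so a countable family generating $\mathcal{A}$ mod $\pp$ need not generate it mod $\pp^\xi$, and an invariant set can be $\pp$-equivalent to a $\bnd$-preimage while cutting a given fiber in a way that is not $\pp^\xi$-equivalent to any $\bnd$-preimage. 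The paper's own remark on measurable partitions, with Hanson's example showing $\bigwedge_{n}(\alpha_n\vee\beta)\neq\left(\bigwedge_{n}\alpha_n\right)\vee\beta$, is a warning about exactly this kind of naive manipulation of $\sigma$-algebras under a change of measure.

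What closes this gap in the paper is Martin boundary theory, fed by part (2) of the preceding lemma. Assuming the conditional boundaries are a.s.\ trivial, that lemma says $u^\xi$ is a \emph{minimal} harmonic function for a.e.\ $\xi$; the paper then quotes Woess (every minimal harmonic function is a Martin kernel, and the Poisson boundary sits as a full-measure subset of the Martin boundary) to write $u^\xi(g)=\frac{dg\nu}{d\nu}(\gamma)$ for a \emph{single} point $\gamma$, so that $\pp^\xi=\pp^\gamma$; finally, mutual singularity of the measures $\pp^{\gamma}$ for distinct $\gamma$ forces the disintegration $\pp^\xi=\int_{\Lambda^{-1}(\xi)}\pp^\gamma\,d\nu_\xi(\gamma)$ to be carried by a single atom, i.e.\ the fiber is a point. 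Note that this argument never needs your stronger claim that the relative Poisson boundary equals the fiber: it only uses the elementary Doob-transform correspondence (bounded $p^\xi$-harmonic functions correspond to $\mu$-harmonic functions dominated by a multiple of $u^\xi$) together with the quoted Martin theory. If you want to keep your route, you must either prove the fiber identification yourself---e.g.\ by a Martin-representation argument: domination $v\le Cu^\xi$ gives domination of representing measures, and the representing measure of $u^\xi$ is $\nu_\xi$ precisely because a.e.\ kernel $\frac{dg\nu}{d\nu}(\gamma)$ is minimal---or cite it as a theorem; as written, your proposal reformulates the lemma rather than proves it.
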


\begin{proof}
Let $\xi \in B$ be a boundary point, and define the function 
$$u^\xi(g) := \frac{d g \lambda}{d \lambda}(\xi).$$
For almost every $\xi$, the function $u^\xi$ is harmonic. Moreover, since the Poisson boundary of the conditional Markov process $P^\xi$ is almost surely trivial, then for almost every $\xi$ the function $u^\xi$ is minimal.

Now, each minimal harmonic function is a Martin kernel (\cite{Woess09}, Theorem 7.50), and the Poisson boundary can be realized as a full measure subset of the Martin boundary (\cite{Woess09}, Section 7.E); hence, for almost every $\xi \in B$ there exists a point $\gamma \in (\partial \Sigma, \nu)$ 
such that 
\begin{equation}\label{eq:minimal partition}
u^\xi(g)=\frac{dg\nu}{d\nu}(\gamma).
\end{equation}
Thus, by definition the Markov processes $p^\xi$ and $p^\gamma$ coincide, hence the measures $\pp^\xi$ and $\pp^\gamma$ on the space of sample paths coincide. 
Let us now consider the quotient map $\Lambda:\p \s\to B$. By definition,  
$$\pp^\xi = \int _{\Lambda^{-1}(\xi)} \pp^\gamma \ d\nu_\xi(\gamma)$$
where $\nu_\xi$ is the induced measure on the fiber $\Lambda^{-1}(\xi)$ of the projection. Finally, let us note that by construction any two measures $\pp^\gamma$ and $\pp^{\gamma'}$ for $\gamma \neq \gamma'$ in $\partial \Sigma$ are mutually singular. Thus, since $\pp^\xi = \pp^\gamma$ we have that 
the measure $\nu_\xi$ must be atomic, hence $\Lambda^{-1}(\xi)$ is a singleton. Since this is true for almost every $\xi \in B$, the map $\Lambda$ is a $\Sigma$-equivariant measurable isomorphism, which proves the claim. 
\end{proof}

\section{Shannon entropy and relative entropy} \label{S:entro}
Let $\zeta=\{\zeta_i : i\geq 1\}$ be a countable partition of the sample space $\Omega$ of the random walk $(\s,\mu)$ into measurable sets. 
The \emph{entropy} (Shannon entropy) of $\zeta$ is defined as 
$$
H_{\pp}(\zeta):=-\sum_i\pp(\zeta_i) \log\pp(\zeta_i),
$$
where we take $0\log0:=0$.
Let $\alpha_k$ be the pointwise partition with respect to the $k^{th}$ position of the random walk $(\s,\mu)$; that is, two sample paths $\x$ and $\x'$ are $\alpha_k$--equivalent if and only if $x_k=x'_k$; therefore,
$$
H_{\pp}(\alpha_k)=-\sum_g\mu^{\star k}(g)\log{\mu^{\star k}(g)}.
$$
Note that $H_{\pp}(\alpha_k)$ sometimes is denoted by $H(\mu^{\star k})$.  
Since the sequence $\{H_{\pp}(\alpha_k)\}_{k\geq1}$ is subadditive, the limit $\frac{1}{k}H_{\pp}(\alpha_k)$ exists and is called the \emph{asymptotic entropy} of the random walk $(\s,\mu)$.
\begin{definition}
We say $\mu$ has finite entropy if $H_{\pp}(\alpha_1)$ is finite. 
\end{definition}

The following theorem is a special case of the entropy criterion due to Kaimanovich-Vershik \cite{K-Vershik83} and Derriennic \cite{Der80}: since 
the Poisson boundary for random walks on abelian groups is trivial, then the asymptotic entropy vanishes.
\begin{theorem}\label{thm:abelian zero entropy}
Let $\s=\mathbb{N}$, and suppose that $H_{\pp}(\alpha_1)$ is finite. Then 
$$
\lim_k\frac{1}{k}H_{\pp}(\alpha_k)=0.
$$
\end{theorem}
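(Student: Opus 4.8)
The plan is to deduce the statement from the classical entropy criterion by passing from the semigroup $\mathbb{N}=\{0,1,2,\dots\}$ to the ambient group $\mathbb{Z}$. The asymptotic entropy $\lim_k\frac1k H_{\pp}(\alpha_k)=\lim_k\frac1k H(\mu^{\star k})$ is defined purely in terms of the convolution powers $\mu^{\star k}$, and these are literally the same measures whether $\mu$ is viewed as supported on $\mathbb{N}$ or on $\mathbb{Z}\supseteq\mathbb{N}$. Hence the asymptotic entropy of $(\mathbb{N},\mu)$ coincides with that of the random walk $(\mathbb{Z},\mu)$, and I may work over the group $\mathbb{Z}$. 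Since $\mathbb{Z}$ is abelian, the Choquet--Deny theorem (equivalently, the Blackwell--Feller result recalled in the introduction) shows that every bounded $\mu$-harmonic function on $\mathbb{Z}$ is constant, so the Poisson boundary of $(\mathbb{Z},\mu)$ is trivial. The hypothesis that $H_{\pp}(\alpha_1)=H(\mu)$ be finite is exactly the finiteness of entropy required to invoke the entropy criterion of Kaimanovich--Vershik and Derriennic, which asserts that for a measure of finite entropy the asymptotic entropy vanishes if and only if the Poisson boundary is trivial. Combining the two yields $\lim_k\frac1k H_{\pp}(\alpha_k)=0$.

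If one prefers a self-contained argument avoiding the full strength of the entropy criterion, I would argue information-theoretically. Writing $x_n=g_1+\dots+g_n$ for the position, with $g_i$ i.i.d. of law $\mu$ and $x_1=g_1$, a short computation using $x_{n+1}\overset{d}{=}g_1+(g_2+\dots+g_{n+1})$ and the independence of the summands gives
$$H(\mu^{\star(n+1)})-H(\mu^{\star n})=I(g_1;x_{n+1}),$$
the mutual information between the first increment and the position at time $n+1$. Telescoping and passing to the Ces\`aro limit, the asymptotic entropy equals $\lim_n I(g_1;x_{n+1})$. Because $g_1\to x_{n+1}\to x_{n+2}$ is a Markov chain (each position is the previous one plus an independent increment), the data-processing inequality makes $I(g_1;x_{n+1})$ non-increasing in $n$, and its limit is the amount of information that the tail $\sigma$-algebra of the partial sums retains about $g_1$.

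The one delicate point, and the main obstacle in either route, is precisely the vanishing of this residual information, i.e. the triviality of the relevant boundary. In the first route this is packaged into Choquet--Deny; in the second it amounts to the statement that the tail $\sigma$-algebra of the i.i.d. partial sums on $\mathbb{Z}$ carries no information about $g_1$, which follows from the Hewitt--Savage $0$--$1$ law together with the Markov structure. The remaining care is purely bookkeeping: checking that the semigroup-to-group passage does not alter the asymptotic entropy (immediate, since it is intrinsic to $\{\mu^{\star k}\}$) and that the finite-entropy hypothesis is exactly the one under which the entropy criterion is valid.
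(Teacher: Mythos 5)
Your first route is precisely the paper's own argument: the paper states this theorem without proof, as a special case of the Kaimanovich--Vershik/Derriennic entropy criterion combined with the Blackwell/Choquet--Deny triviality of the Poisson boundary on abelian groups, and your observation that the asymptotic entropy is intrinsic to the convolution powers $\mu^{\star k}$ (so one may pass from the semigroup $\mathbb{N}$ to the group $\mathbb{Z}$, where both cited results apply) is a correct and in fact slightly more careful justification of that citation. Your second, information-theoretic route is a reasonable sketch of the proof of the entropy criterion itself, but it is not needed and adds nothing beyond the first route.
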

For a given $\mu$--boundary $(B,\la)$, two sample paths $\x$ and $\x'$ are $B$--equivalent if $\Lambda\circ\bnd(\x)=\Lambda\circ\bnd(\x')$. 
If $\zeta$ is a countable partition of the space of sample paths, for any $\xi$ in $B$ we set  
$$
H_{\pp}(\zeta|\xi):=-\sum_i\pp^{\xi}(\zeta_i)\log\pp^{\xi}(\zeta_i).
$$
We define the \emph{conditional entropy} as
$$
H_{\pp}(\zeta| B)=\int_{B}H(\zeta|\xi) \ d\la(\xi).
$$

We need the following monotonicity property for the relative entropy, which goes back to \cite{Rokhlin52}. 

\begin{lemma}\label{lem:entropy properties}
Let $(B,\lambda)$ be a $\mu$--boundary. If $\zeta$ is a countable partition, then
$$
H_{\pp}(\zeta|B)\leq H_{\pp}(\zeta).
$$
\end{lemma}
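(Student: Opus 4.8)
The plan is to reduce the desired inequality to the concavity of the one-variable entropy function $\phi(t) := -t\log t$, applied separately to each block of the partition, followed by an application of Jensen's inequality against the probability measure $\la$ on $B$. The one structural ingredient I would invoke is the disintegration \eqref{eq:disintegration}, which upon evaluating both sides on a block $\zeta_i$ gives
$$\pp(\zeta_i) = \int_B \pp^\xi(\zeta_i)\, d\la(\xi);$$
in other words, each number $\pp(\zeta_i)$ is the $\la$-barycenter of the family $\{\pp^\xi(\zeta_i)\}_{\xi \in B}$.

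First I would note that $\phi$ is concave on $[0,\infty)$ and nonnegative on $[0,1]$, the range in which all the quantities $\pp^\xi(\zeta_i)$ live. Applying Jensen's inequality to the probability measure $\la$ and the integrand $\xi \mapsto \pp^\xi(\zeta_i)$ yields, for each fixed $i$,
$$\phi\big(\pp(\zeta_i)\big) = \phi\left(\int_B \pp^\xi(\zeta_i)\, d\la(\xi)\right) \geq \int_B \phi\big(\pp^\xi(\zeta_i)\big)\, d\la(\xi).$$
I would then sum over $i$. The left-hand side is by definition $H_{\pp}(\zeta)$, while on the right-hand side I would interchange the sum over $i$ with the integral over $B$ to obtain
$$\sum_i \int_B \phi\big(\pp^\xi(\zeta_i)\big)\, d\la(\xi) = \int_B \sum_i \phi\big(\pp^\xi(\zeta_i)\big)\, d\la(\xi) = \int_B H_{\pp}(\zeta|\xi)\, d\la(\xi) = H_{\pp}(\zeta|B),$$
which combined with the previous display gives exactly $H_{\pp}(\zeta|B) \leq H_{\pp}(\zeta)$.

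The bookkeeping above is routine, and the only step that genuinely requires justification is the interchange of the summation over $i$ and the integration over $\xi$. This is where I expect the main (albeit mild) obstacle to lie: it is legitimate precisely because each summand $\phi\big(\pp^\xi(\zeta_i)\big)$ is nonnegative, so Tonelli's theorem applies without any integrability hypothesis. Since the entropy may a priori be infinite, I would phrase every inequality as an inequality in $[0,+\infty]$, so that the statement remains meaningful and correct even when $H_{\pp}(\zeta) = +\infty$.
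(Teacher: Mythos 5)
Your proof is correct. There is, however, nothing in the paper to compare it against line by line: the paper states this lemma without proof, attributing the monotonicity of relative entropy to Rokhlin's theory of measurable partitions \cite{Rokhlin52}. Your argument supplies a self-contained proof of exactly the classical fact being cited, by the standard route: evaluate the disintegration \eqref{eq:disintegration} on each block $\zeta_i$ to express $\pp(\zeta_i)$ as the $\la$-barycenter of $\{\pp^\xi(\zeta_i)\}_{\xi\in B}$, apply Jensen's inequality blockwise to the concave function $\phi(t)=-t\log t$, then sum over $i$ and interchange $\sum_i$ with $\int_B$ via Tonelli. The points you flag are handled correctly: since $\pp^\xi(\zeta_i)\in[0,1]$, the function $\phi$ is nonnegative and bounded there, so Jensen requires no integrability hypothesis and Tonelli legitimizes the interchange; phrasing all inequalities in $[0,+\infty]$ keeps the statement valid even when $H_{\pp}(\zeta)=+\infty$, which is the right level of generality (in the paper's application in Section \ref{S:main1} the right-hand side happens to be finite, but your version costs nothing extra). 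The one step you use implicitly and could state explicitly is that $\xi\mapsto\pp^\xi(\zeta_i)$ is $\la$-measurable; this is part of what the disintegration \eqref{eq:disintegration} asserts, so it is available, but since it is the only structural input from boundary theory it deserves a mention. In short, your proposal correctly fills in a proof the paper delegates to the literature, and what it buys is a short, elementary, reference-free argument.
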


\subsection{Relative entropy}
Let $\p \s$ be the space of all infinite sequences of elements of $W$: 
$$
\p \s =\Big\{\{g_n\}_{n\geq 1}\ :\ g_n\in W \mbox{ for } n\in\N\Big\}.
$$
Each element of $\p \s$ corresponds to a geodesic in the Cayley graph of $\Sigma$ with the standard generating set. 
As usual, one defines a topology on $\Sigma \cup \p \s$ by saying that a sequence $\{w_n\}_{n \geq 0} \subseteq \Sigma$  
of finite words converges to an infinite word $\g \in \p \s$ if any finite prefix of $\g$ is also a prefix of $w_n$ for all $n$ sufficiently large.
Thus, we will think of $\p \s$ as a geometric boundary of $\Sigma$. 

Let us now pick a measure $\mu$ on $\Sigma$, and consider the \emph{random walk} defined by picking a random sequence $\{g_n\}_{n \geq 1}$ 
of elements of $\Sigma$ independently with distribution $\mu$, and consider the product
$$x_n:= g_1 \dots g_n$$
Since there is no backtracking in the free semigroup, almost every sample path $\{x_n\}_{n\geq 0})$ converges to a point in $\p \s$. 
This defines a boundary map 
$$\Lambda : \Omega \to \p \s$$
$$\Lambda(\{ g_n \}_{n \geq 1}) := \lim_{n \to \infty} g_1 g_2 \dots g_n$$
and the pushforward measure $\lambda := \Lambda_\star \pp$ is called the \emph{hitting measure} for the random walk. 
Thus, the space $(\p \s, \lambda)$ is a $\mu$-boundary for the random walk $(\Sigma, \mu)$. 
Finally, for each $\g \in \p \s$ we denote the conditional measure of $\pp$ with respect to $\g$ as $\pp^{\g}$.

In order to study the Poisson boundary of the random walk $(\s,\mu)$, we will recast the conditional random walk with respect to the $\mu$--boundary $(\p \s,\la)$ in the framework of random walks on equivalence relations, which was introduced in \cite{Kaimanovich-Sobieczky2012}. 
Let us consider the equivalence relation on $\p \s$ given by the orbits for the action of $\s$; namely, two infinite words $\g=\{g_n\}_{n\geq 1}$ and $\g'=\{g'_n\}_{n\geq 1}$ are equivalent if their tails eventually coincide, that is there exist natural numbers $i$ and $j$ such that $g_{n+i}=g'_{n+j}$ for all $n\geq 0$. 

We define a Markov process on $\p \s$, called the \emph{leafwise Markov chain}, by setting for each $\g$ in $\p \s$ and each $w \in \s$ the transition probabilities
$$\pi(\g, w^{-1} \g):=p^{\g}(e, w) = \mu(w) \frac{d w \lambda}{d \lambda}(\g).$$
Note that sample paths for this process lie all the time in the same equivalence class, hence the process can be interpreted as a random walk along the equivalence relation. 
Note that $\pi$ defines for each $\g \in \partial \Sigma$ a Markov chain on $\s$, by setting for each $x, y \in \s$
$$p^{\g}(x, y) = \pi(x^{-1} \g, y^{-1} \g)$$
and by construction this precisely equals the conditional random walk defined in eq. \eqref{E:cond}.
Equivalently, this process can also be seen as a special case of a random walk with random environment $\p \s$ (but we will not use this language): an infinite word $\g$ is picked 
randomly according to the law of $\la$, and this determines the Markov process $p^{\g}$.

For each $n$, one defines the entropy of the $k^{th}$-step distribution as
$$
H_k(\g) =-\int_{\Omega}\log{\pp^{\g}(y_k = x_k)} \ d\pp^{\g}(\x)
$$ 
where $\pp^{\g}(y_k = x_k)$ means $ \pp^{\g}(\{ (y_n) \in \Omega \ : \ y_k = x_k\})$. Moreover, we set
$$H_k = \int_{\p \s} H_k(\g) \ d\lambda(\g)$$
the average entropy of the $k^{th}$ step.
In the language of partitions, we have 
$$
H_k(\g)=H_{\pp}(\alpha_k|\g)\ \ \mbox{and }\ \  H_k=H_{\pp}(\alpha_k|\p \s).
$$

We will apply the following entropy criterion for random walks along equivalence classes, due to Kaimanovich-Sobieczky.

\begin{theorem}[\cite{Kaimanovich-Sobieczky2012}]           \label{thm:KS}
If $H_1 < \infty$, then all entropies $H_n$ are finite, and there exists the limit 
$$h = \lim_{k\to \infty} \frac{H_k}{k} < \infty.$$
Moreover, $h = 0$ if and only if for $\lambda$-a.e. point $\g \in \p \s$ the Poisson boundary of the leafwise Markov chain is trivial. 
\end{theorem}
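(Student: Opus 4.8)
The plan is to adapt the classical Kaimanovich--Vershik--Derriennic entropy criterion from group random walks to the leafwise chain. The first task is to produce the limit $h$. The structural input I would exploit is that the transition kernel $\pi(\g,w^{-1}\g)=\mu(w)\frac{dw\la}{d\la}(\g)$ is invariant along the orbit equivalence relation, so that $\la$ is stationary for the leafwise chain. Concretely, conditioned on the event $\{x_m=z\}$, the remaining $n$ steps of $\pp^\g$ have the same law as the leafwise walk started from $e$ relative to the shifted boundary point $z^{-1}\g$. Combining this with the Markov property and the chain rule for Shannon entropy gives the pointwise estimate $H_{m+n}(\g)\le H_m(\g)+\sum_{z}\pp^\g(x_m=z)H_n(z^{-1}\g)$; integrating against $\la$ and using that, under $\pp=\int\pp^\g\,d\la(\g)$, the shifted point $x_m^{-1}\g$ is again $\la$-distributed collapses the last term to $H_n$, which yields the subadditivity $H_{m+n}\le H_m+H_n$. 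From $H_1<\infty$ one deduces $H_n\le nH_1<\infty$ for all $n$, and Fekete's lemma produces the limit $h=\lim_k H_k/k=\inf_k H_k/k<\infty$.

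For the equivalence with boundary triviality, I would pass to the increments $H_k-H_{k-1}$. A refinement of the subadditivity estimate, using monotonicity of conditional entropies, shows that $k\mapsto H_k$ is concave, so these increments are non-increasing and $h=\lim_k(H_k-H_{k-1})$. Writing $x_k=x_{k-1}g_k$ and applying the chain rule under $\pp^\g$, the forward term $H_{\pp^\g}(x_k\mid x_{k-1})$ averages to $H_1$ by stationarity, so that $H_k-H_{k-1}=H_1-R_k$, where $R_k=\int_{\p\s}H_{\pp^\g}(x_{k-1}\mid x_k)\,d\la(\g)$ is the averaged residual uncertainty about the previous position given the current one. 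Since conditioning decreases entropy we have $R_k\le H_1$, hence $h\ge 0$, and the content of the criterion is that $h$ equals the mutual information between a single increment and the leafwise boundary.

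To make this precise I would invoke reverse martingale convergence for the decreasing family $\sigma(x_k,x_{k+1},\dots)$ together with the Shannon--McMillan--Breiman theorem for the leafwise chain, which gives $-\tfrac1k\log\pp^\g(x_k)\to h$ in $L^1(\pp^\g)$ for $\la$-a.e. $\g$. In the limit $R_k$ increases to the conditional entropy of an increment given the tail $\sigma$-algebra of the leafwise chain, so that $h=0$ is equivalent to each increment being asymptotically independent of that tail field. Since the leafwise boundary is generated by the increment sequence, a Kolmogorov-type $0$--$1$ argument then upgrades this independence to triviality of the tail field, hence of the leafwise Poisson boundary, for $\la$-a.e. $\g$; the converse implication runs through the same chain of equalities.

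The main obstacle I expect is twofold. First, the leafwise stationarity of $\la$ must be established carefully, since every step involves the Radon--Nikodym cocycle $\frac{dw\la}{d\la}$, and these must be manipulated consistently along the orbit relation; this is precisely what converts the pointwise entropy bound into honest subadditivity after averaging. Second, because the leafwise chain is spatially inhomogeneous, justifying the limiting arguments --- both the $L^1$ Shannon--McMillan--Breiman convergence and the reverse-martingale identification of $h$ with a boundary conditional entropy --- requires uniform control of the tails of the information functions $-\log\pp^\g(x_k)$. It is exactly here that the hypothesis $H_1<\infty$ is indispensable, and securing this integrability uniformly in $\g$ is the technical heart of the proof.
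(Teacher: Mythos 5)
First, a point of comparison that matters: the paper does \emph{not} prove this statement. Theorem \ref{thm:KS} is imported verbatim from \cite{Kaimanovich-Sobieczky2012} and used as a black box (its only role is to feed Theorem \ref{thm:trivial conditional bnd}), so there is no internal proof to measure your attempt against; what follows judges your argument against what a proof of the Kaimanovich--Sobieczky criterion actually requires. Your first paragraph is correct and is the standard opening move: writing $\pp^{\g}(x_m=z)=\mu^{\star m}(z)\frac{dz\la}{d\la}(\g)$ and changing variables $\g=z\g'$ shows that the $\la$-average of $\sum_z\pp^{\g}(x_m=z)H_n(z^{-1}\g)$ is exactly $H_n$ (leafwise stationarity of $\la$), so the pointwise chain-rule bound integrates to $H_{m+n}\le H_m+H_n$, and Fekete yields the finite limit $h$ together with $H_n\le nH_1$.

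The second half --- the equivalence, which is the real content --- is where your proposal stops being a proof. Two concrete gaps. First, your quantity $R_k=\int H_{\pp^{\g}}(x_{k-1}\mid x_k)\,d\la(\g)$ has both the conditioned variable and the conditioning variable moving with $k$, so reverse-martingale convergence does not apply to it as stated, and "$R_k$ increases to the conditional entropy given the tail" is not justified. The correct object is $\int H_{\pp^{\g}}(x_1\mid x_k)\,d\la(\g)$: it has the same value $H_1-(H_k-H_{k-1})$ (chain rule plus stationarity), but it conditions the \emph{fixed} variable $x_1$ on the decreasing $\sigma$-algebras $\mathcal{A}_k=\sigma(x_k,x_{k+1},\dots)$; since by the Markov property $H_{\pp^{\g}}(x_1\mid x_k)=H_{\pp^{\g}}(x_1\mid\mathcal{A}_k)$, martingale convergence then identifies the limit as the tail conditional entropy, gives the concavity you merely asserted, and yields $h=\int\bigl(H_{\pp^{\g}}(x_1)-H_{\pp^{\g}}(x_1\mid\mathcal{A}_\infty)\bigr)\,d\la(\g)$. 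Second, your appeals to the Shannon--McMillan--Breiman theorem, to the classical Kaimanovich--Vershik theorem, and to a Kolmogorov-type $0$--$1$ law are not off-the-shelf here: the leafwise chains $p^{\g}$ are space-inhomogeneous Markov chains, not group random walks, so none of these results applies by quotation. What is actually needed (and what \cite{Kaimanovich-Sobieczky2012} supply, following Kaimanovich's $0$--$2$ laws for general Markov chains) is the argument that vanishing averaged mutual information for \emph{all} finite blocks $(x_1,\dots,x_n)$ forces $\la$-a.e.\ triviality of the tail $\sigma$-algebra of $p^{\g}$, together with the identification of tail triviality with triviality of the Poisson boundary for these chains. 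Your outline names the right landmarks, but the passage from $h=0$ to a.e.\ leafwise boundary triviality --- the heart of the theorem --- is asserted rather than proved.
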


Let us call $h$ the \emph{relative asymptotic entropy}. By combining the theorem with the previous observations, we get the following.

\begin{theorem}\label{thm:trivial conditional bnd}
If $H_1 < \infty$ and the relative asymptotic entropy $h$ is zero, 
then $(\p \s, \lambda)$ is a model for the Poisson boundary of $(\s, \mu)$.
\end{theorem}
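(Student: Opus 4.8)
The goal is to prove Theorem \ref{thm:trivial conditional bnd}, namely that under the two hypotheses $H_1 < \infty$ and $h = 0$, the $\mu$--boundary $(\p \s, \lambda)$ is actually the full Poisson boundary. The plan is to combine the two criteria already established in the excerpt: Lemma \ref{lem:maximality}, which reduces maximality of a $\mu$--boundary to the almost-sure triviality of all the conditional (leafwise) random walks, and the Kaimanovich--Sobieczky criterion of Theorem \ref{thm:KS}, which detects this triviality through the relative asymptotic entropy.

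First I would recall that $(\p \s, \lambda)$ has already been identified as a $\mu$--boundary in the preceding subsection, via the boundary map $\Lambda$ and the hitting measure $\lambda = \Lambda_\star \pp$. Then I would invoke the hypothesis $H_1 < \infty$ to enter the setting of Theorem \ref{thm:KS}: this guarantees that all the step entropies $H_n$ are finite and that the relative asymptotic entropy $h = \lim_k H_k/k$ exists and is finite. Next, feeding in the second hypothesis $h = 0$, the \emph{if and only if} part of Theorem \ref{thm:KS} yields that for $\lambda$--almost every $\g \in \p \s$ the Poisson boundary of the leafwise Markov chain is trivial.

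The final link is to match the leafwise Markov chain with the conditional random walk appearing in Lemma \ref{lem:maximality}. This identification was in fact already made explicit when the leafwise chain $\pi$ was introduced: by construction $p^{\g}(x,y) = \pi(x^{-1}\g, y^{-1}\g)$ coincides with the conditional random walk of eq. \eqref{E:cond}. Hence triviality of the Poisson boundary of the leafwise chain at $\g$ is exactly triviality of the Poisson boundary of the conditional random walk $p^{\g}$. Therefore the conclusion of Theorem \ref{thm:KS} says precisely that the conditional random walks are $\lambda$--almost surely trivial, which is the hypothesis of Lemma \ref{lem:maximality}. Applying that lemma then shows that $(\p \s, \lambda)$ is the Poisson boundary, completing the proof.

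I expect the proof itself to be essentially a formal concatenation of the two cited results, so the main work is conceptual rather than computational: one must be careful that the two notions of \emph{conditional random walk} — the abstract one in eq. \eqref{E:cond} defined through Radon--Nikodym derivatives, and the leafwise chain $\pi$ defined on the equivalence relation — are genuinely the same process, so that the triviality statements transfer between the two frameworks without loss. Once that dictionary is in place the argument is immediate, and no further estimates are needed here, since the quantitative content (subadditivity, existence of $h$, and the entropy-zero criterion) is entirely absorbed into Theorem \ref{thm:KS}.
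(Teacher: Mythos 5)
Your proposal is correct and follows exactly the paper's own proof: invoke Theorem~\ref{thm:KS} with $H_1 < \infty$ and $h = 0$ to get almost-sure triviality of the conditional (leafwise) Poisson boundaries, then conclude via Lemma~\ref{lem:maximality}. The identification of the leafwise chain $\pi$ with the conditional random walk of eq.~\eqref{E:cond}, which you rightly flag as the one point needing care, is precisely what the paper establishes when introducing $\pi$, so no gap remains.
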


\begin{proof}
When $h=0$, Theorem~\ref{thm:KS} implies that  for $\la$--almost every infinite word $\g$ in the $\mu$--boundary $(\p \s,\la)$, the Poisson boundary associated with the conditional random walk $\pp^{\g}$ is trivial. Therefore, by Lemma~\ref{lem:maximality}, the $\mu$--boundary $(\p \s,\la)$ is indeed the Poisson boundary.

\end{proof}

\section{First criterion: finite entropy of the projection to $\mathbb{N}$} \label{S:main1}

In this section, we will prove our first criterion to identify the Poisson boundary (Theorem \ref{thm:main homomorphism}), which readily implies 
Theorem \ref{thm:main either or} from the introduction.

Consider the semigroup homomorphism 
$$
\begin{array}{c}
\phi:\s\to\mathbb{N}\\
w\to |w|.
\end{array}
$$
Let $\mu_{\phi}$ be the image of the probability measure $\mu$ under $\phi$. Since $\phi$ is a semigroup homomorphism, the $n$--fold convolution  of $\mu$ is mapped to the $n$--fold convolution  of $\mu_\phi$, which means 
\begin{equation}\label{eq:homoemorphis}
(\mu^{\star n})_{\phi}=(\mu_{\phi})^{\star n} \qquad \textup{for any }n.
\end{equation}
This implies that for any sample path $\x=\{x_n\}_{n\geq0}$ with respect to the random walk $(\s,\mu)$, its image  $\phi(\x):=\{\phi(x_n)\}_{n\geq0}$ is a sample path with respect to the random walk $(\mathbb{N},\mu_\phi)$.

We now define for each $k$ a partition on the space of sample paths, by setting two sample paths $\x$ and $\x'$ to be \emph{$\phi_k$--equivalent} if $|x_k|=|x'_k|$, or equivalently $\phi(x_k)=\phi(x'_k)$. 
\begin{lemma}\label{lem:zero phi}
If  $H_{\pp}(\phi_1)$ is finite, then $\lim_k\frac{1}{k}H(\phi_k)=0$.
\end{lemma}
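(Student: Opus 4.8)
The plan is to reduce everything to the already-established abelian statement, Theorem \ref{thm:abelian zero entropy}, by recognizing that the partition $\phi_k$ is nothing but the position partition of the projected random walk on $\mathbb{N}$. The first observation I would make is that $\phi_k$-equivalence of two sample paths depends only on the common value $\phi(x_k) \in \mathbb{N}$, so the partition $\phi_k$ is the pullback, under the projection map $\x \mapsto \phi(\x) = \{\phi(x_n)\}_{n \geq 0}$, of the pointwise position partition $\alpha_k$ of the walk $(\mathbb{N}, \mu_\phi)$. Consequently its entropy is computed purely from the distribution of $\phi(x_k)$:
$$
H_{\pp}(\phi_k) = -\sum_{m \in \mathbb{N}} \pp\big(\phi(x_k) = m\big) \log \pp\big(\phi(x_k) = m\big).
$$

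Next I would use the homomorphism identity \eqref{eq:homoemorphis}. Since $\phi$ is a semigroup homomorphism and the image of $\mu$ under $\phi$ is $\mu_\phi$, the distribution of $\phi(x_k)$ is exactly $(\mu^{\star k})_\phi = (\mu_\phi)^{\star k}$. Substituting, this gives
$$
H_{\pp}(\phi_k) = -\sum_{m \in \mathbb{N}} (\mu_\phi)^{\star k}(m) \log (\mu_\phi)^{\star k}(m),
$$
which is precisely the Shannon entropy $H_{\pp'}(\alpha_k)$ of the $k$-th position of the random walk $(\mathbb{N}, \mu_\phi)$, where $\pp'$ denotes its path measure. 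In the language used just before, the pushforward of $\pp$ under the projection coincides with the path measure of the $\mathbb{N}$-walk, which is what makes the two entropies agree.

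Finally, I would observe that the hypothesis $H_{\pp}(\phi_1) < \infty$ is, under this identification, exactly the statement that $H_{\pp'}(\alpha_1) = H(\mu_\phi)$ is finite, i.e. the driving measure $\mu_\phi$ on $\mathbb{N}$ has finite entropy. Theorem \ref{thm:abelian zero entropy}, applied with $\s = \mathbb{N}$, then yields directly
$$
\lim_k \frac{1}{k} H_{\pp}(\phi_k) = \lim_k \frac{1}{k} H_{\pp'}(\alpha_k) = 0,
$$
which is the desired conclusion. I do not expect a genuine obstacle here: the only point requiring care is the bookkeeping that certifies the entropy of the pullback partition $\phi_k$ equals the position entropy of the $\mathbb{N}$-walk, and this is guaranteed by \eqref{eq:homoemorphis}. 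Once that identification is in place, the lemma is an immediate instance of the abelian entropy vanishing already recorded in Theorem \ref{thm:abelian zero entropy}.
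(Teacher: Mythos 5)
Your proposal is correct and follows exactly the paper's own argument: both identify $H_{\pp}(\phi_k)$ with the position entropy of the projected walk $(\mathbb{N},\mu_\phi)$ via the homomorphism identity \eqref{eq:homoemorphis}, and then invoke Theorem \ref{thm:abelian zero entropy}. Your write-up simply makes explicit the bookkeeping that the paper compresses into the single line $H_{\pp}(\phi_k)=H_{\phi\circ\pp}(\alpha_k)$.
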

\begin{proof}
Let $\phi\circ\pp$ be the probability measure measure on the sample paths associated to $\mu_\phi$. 
Equation \eqref{eq:homoemorphis} and the definition of entropy for partitions implies that
\begin{equation}\label{eq:entropy projection}
H_{\pp}(\phi_k)=H_{\phi\circ\pp}(\alpha_k).
\end{equation}
By Theorem~\ref{thm:abelian zero entropy}, we have the desired result.
\end{proof}

\begin{proposition}
If $\mu_\phi$ has finite entropy, then the relative entropy $h$ is zero.
\end{proposition}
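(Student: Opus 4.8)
The plan is to exploit the defining feature of the free semigroup: once we condition on a boundary point $\g$, the position of the walk is determined by its length, so the relative entropy of the walk coincides with the relative entropy of its projection to $\mathbb{N}$.

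First I would record the key structural observation. For any sample path $\x=\{x_n\}_{n\geq 0}$ in the free semigroup we have $x_{n+1}=x_n g_{n+1}$, so each $x_n$ is a prefix of $x_{n+1}$, and the increasing sequence of prefixes converges to the limiting infinite word. Hence, for $\la$-almost every $\g \in \p\s$, the conditional measure $\pp^{\g}$ is carried by sample paths all of whose positions $x_k$ are prefixes of $\g$. Since a prefix of $\g$ is uniquely determined by its length, under $\pp^{\g}$ the value of $x_k$ is a deterministic function of $\phi(x_k)=|x_k|$. Consequently the partitions $\alpha_k$ and $\phi_k$ generate the same $\sigma$-algebra modulo $\pp^{\g}$-null sets, so that
$$H_k(\g)=H_{\pp}(\alpha_k|\g)=H_{\pp}(\phi_k|\g)$$
for $\la$-almost every $\g$. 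Integrating over $\p\s$ then gives $H_k=H_{\pp}(\phi_k|\p\s)$ for every $k$.

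Next I would assemble the earlier lemmas. Because $\mu_\phi$ has finite entropy, $H_{\pp}(\phi_1)=H(\mu_\phi)<\infty$, and by the identity above together with the monotonicity of Lemma~\ref{lem:entropy properties} we get $H_1=H_{\pp}(\phi_1|\p\s)\leq H_{\pp}(\phi_1)<\infty$; hence Theorem~\ref{thm:KS} applies and the limit $h=\lim_k H_k/k$ exists. Applying monotonicity at each level gives $H_k=H_{\pp}(\phi_k|\p\s)\leq H_{\pp}(\phi_k)$, while Lemma~\ref{lem:zero phi} yields $\lim_k\frac{1}{k}H_{\pp}(\phi_k)=0$. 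Dividing by $k$ and letting $k\to\infty$ then forces $0\leq h\leq \lim_k\frac{1}{k}H_{\pp}(\phi_k)=0$, so $h=0$.

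The step I expect to be the main obstacle is the structural observation itself: one must justify rigorously that, after conditioning on $\g$, the position $x_k$ is recoverable from its length alone, and hence that $\alpha_k$ and $\phi_k$ agree modulo $\pp^{\g}$-null sets. This is precisely where freeness (the absence of backtracking) is indispensable — on a general group several elements of the same length may be prefixes of a geodesic ray, or the walk may leave and re-enter a ball, and the identification would fail. Verifying that $\pp^{\g}$ is carried by paths converging to $\g$ for $\la$-almost every $\g$, and making the coincidence of $\sigma$-algebras precise, is the only delicate point; everything else is a formal consequence of the quoted lemmas.
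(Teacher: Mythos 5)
Your proposal is correct and follows essentially the same route as the paper: the paper's key step is exactly your structural observation (two paths lying on the same infinite word $\g$ with the same length at time $k$ have the same position at time $k$, by absence of cancellation), which gives $H_k(\g)=H_{\pp}(\phi_k|\g)$, after which integration, the monotonicity of Lemma~\ref{lem:entropy properties}, and Lemma~\ref{lem:zero phi} finish the argument just as you describe. Your write-up is in fact slightly more careful than the paper's on two points: you phrase the coincidence as an identification of $\sigma$-algebras modulo $\pp^{\g}$-null sets (the paper's literal claim ``$\x=\x'$'' should really be ``$x_k=x'_k$''), and you explicitly verify $H_1<\infty$ before invoking Theorem~\ref{thm:KS} to guarantee the limit $h$ exists.
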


\begin{proof}
We say two sample paths $\x$ and $\x'$ are $\p \s$--equivalent if and only if $\x$ and $\x'$ lie on the same infinite word. Note that if two sample paths $\x$ and $\x'$ are $\p \s$--equivalent and at the same time $\phi_k$--equivalent for some $k$, then $\x=\x'$. This is due to the lack of cancellations in the random walk $(\s,\mu)$, therefore if $\g=\{g_n\}_{n\geq1}$, we have
$$
\pp^{\g}\Big\{\x :\ |x_k|=n\Big\}=\pp^{\g}\Big\{\x\ :\ x_k=g_1\cdots g_n\Big\},
$$
which implies 
\begin{equation}
H_{\pp}(\phi_k|\g)=H_k({\g}).
\end{equation}
Taking the integral with respect to $\la$ on both sides yields
\begin{equation}\label{eq:coincidece of entropy}
H_{\pp}(\phi_k|\p \s)=\int_{\p \s}H(\phi_k|\g) \ d\la(\g)=\int_{\p \s}H_k({\g})\ d\la(\g)=H_k.
\end{equation}
Therefore, combining it with Lemma~\ref{lem:entropy properties} implies that
$$
H_k=H_{\pp}(\phi_k|\p \s)\leq H_{\pp}(\phi_k).
$$
Since $H_{\pp}(\phi_1)$ is finite, applying Lemma~\ref{lem:zero phi} yields
\begin{equation}\label{eq:abelian0}
h = \lim_k\frac{H_k}{k}\leq\lim_k\frac{H_{\pp}(\phi_k)}{k}=0
\end{equation}
as claimed.
\end{proof}

By virtue of Theorem~\ref{thm:trivial conditional bnd} and the previous proposition we obtain the main result of this section:

\begin{theorem}\label{thm:main homomorphism}
If the measure $\mu_\phi$ on $\mathbb{N}$ has finite entropy, then $(\p \s,\la)$ is the Poisson boundary of the random walk $(\s,\mu)$.
\end{theorem}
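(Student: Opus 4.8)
The plan is to invoke Theorem~\ref{thm:trivial conditional bnd}, which guarantees that $(\p\s,\la)$ is the Poisson boundary as soon as two conditions are met: the first-step relative entropy $H_1$ is finite, and the relative asymptotic entropy $h$ vanishes. The second condition is supplied immediately by the Proposition preceding this theorem, which shows that finite entropy of $\mu_\phi$ forces $h=0$. Hence the entire argument reduces to verifying the remaining hypothesis $H_1 < \infty$.

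To establish $H_1 < \infty$, I would reuse the chain of identities already assembled in the proof of that Proposition. By equation~\eqref{eq:coincidece of entropy} we have $H_1 = H_{\pp}(\phi_1|\p\s)$, and Lemma~\ref{lem:entropy properties} (monotonicity of the relative entropy) gives $H_{\pp}(\phi_1|\p\s) \leq H_{\pp}(\phi_1)$. Finally, equation~\eqref{eq:entropy projection} identifies $H_{\pp}(\phi_1)$ with $H_{\phi\circ\pp}(\alpha_1)$, which is precisely the Shannon entropy $-\sum_n \mu_\phi(n)\log\mu_\phi(n)$ of the projected measure $\mu_\phi$. Since $\mu_\phi$ is assumed to have finite entropy, this quantity is finite, and the chain yields $H_1 \leq H_{\pp}(\phi_1) < \infty$.

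With both hypotheses of Theorem~\ref{thm:trivial conditional bnd} now in hand, namely $H_1 < \infty$ and $h=0$, the theorem applies directly and identifies $(\p\s,\la)$ with the Poisson boundary of $(\s,\mu)$, which completes the proof.

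I do not anticipate any genuine obstacle: the statement is essentially a repackaging of the preceding Proposition together with Theorem~\ref{thm:trivial conditional bnd}. The only point deserving a moment's care is that Theorem~\ref{thm:trivial conditional bnd} requires $H_1 < \infty$ as a \emph{separate} hypothesis, whereas the Proposition outputs only $h=0$; reassuringly, the very bound $H_1 \leq H_{\pp}(\phi_1) = H(\mu_\phi)$ that one uses along the way already guarantees finiteness of $H_1$ under the present hypothesis, so no additional input is needed.
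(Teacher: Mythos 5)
Your proposal is correct and follows exactly the paper's route: the paper deduces this theorem directly from Theorem~\ref{thm:trivial conditional bnd} together with the preceding Proposition, and the finiteness of $H_1$ that you verify separately is precisely the bound $H_1 \leq H_{\pp}(\phi_1) = H(\mu_\phi) < \infty$ already contained in the Proposition's chain of inequalities \eqref{eq:coincidece of entropy}--\eqref{eq:abelian0}. Your only addition is making that implicit step explicit, which is a faithful (and slightly more careful) rendering of the same argument.
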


We now see a few corollaries. In particular, it is sufficient to assume that the original measure $\mu$ on $\Sigma$ has finite entropy. 

\begin{corollary}\label{thm:main entropy}
If $\mu$ has finite entropy, then $(\p \s,\la)$ is the Poisson boundary of the random walk $(\s,\mu)$.
\end{corollary}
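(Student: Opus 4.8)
The plan is to deduce the corollary directly from Theorem~\ref{thm:main homomorphism}, by verifying that finite entropy of $\mu$ forces the projected measure $\mu_\phi$ on $\mathbb{N}$ to have finite entropy as well. Once this is established, Theorem~\ref{thm:main homomorphism} identifies $(\p \s, \la)$ with the Poisson boundary and the proof is complete.

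First I would unwind the definitions. The homomorphism $\phi$ sends a word to its length, so the pushforward is $\mu_\phi(n) = \sum_{|g| = n} \mu(g)$, and its entropy coincides with the partition entropy $H_{\pp}(\phi_1) = -\sum_n \mu_\phi(n) \log \mu_\phi(n)$ from the previous section. The key observation is then a comparison of the two partitions of the first step: the partition $\alpha_1$, which records the full value of $x_1$, refines the partition $\phi_1$, which records only the length $|x_1|$, since $x_1 = x_1'$ implies $|x_1| = |x_1'|$. Because Shannon entropy is monotone under coarsening of partitions, I would conclude
$$H_{\pp}(\phi_1) \leq H_{\pp}(\alpha_1).$$

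By hypothesis $\mu$ has finite entropy, that is $H_{\pp}(\alpha_1) < \infty$, whence $H(\mu_\phi) = H_{\pp}(\phi_1) < \infty$ and $\mu_\phi$ has finite entropy. Applying Theorem~\ref{thm:main homomorphism} gives the claim. There is no genuine obstacle here: the only substantive ingredient is the inequality $H(\mu_\phi) \leq H(\mu)$, which is the elementary fact that a deterministic function of a random variable cannot increase its Shannon entropy, and which follows from the concavity of $t \mapsto -t \log t$ by grouping the atoms of $\alpha_1$ according to their common image under $\phi$.
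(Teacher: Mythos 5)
Your proof is correct and follows exactly the paper's own argument: observe that $\alpha_1$ refines $\phi_1$, use monotonicity of Shannon entropy under coarsening to get $H_{\pp}(\phi_1)\leq H_{\pp}(\alpha_1)<\infty$, and invoke Theorem~\ref{thm:main homomorphism}. Your added justification of the monotonicity via concavity of $t\mapsto -t\log t$ is a harmless elaboration of what the paper takes for granted.
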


\begin{proof}
Since the partition $\alpha_1$ is a subpartition  of the partition $\phi_1$, we have  
$$H_{\pp}(\phi_1)\leq H_{\pp}(\alpha_1)<\infty.$$ Therefore,
$\mu_\phi$ has finite entropy and Theorem~\ref{thm:main homomorphism} holds.
\end{proof}
\subsection{Logarithmic moment}
Let us define the \emph{logarithmic moment} of $\mu$ as
$$
L(\mu):=\sum_g\mu(g)\log|g|.
$$

The following elementary calculation shows that on $\mathbb{N}$ finite logarithmic moment implies finite entropy. 

\begin{lemma}[\cite{Der80}]  \label{lem : finite entropy}
Let $\theta$ be a probability measure on $\Bbb N$.  
If $L(\theta)=\sum_n\theta(n)\log n$ is finite, then $\theta$ has finite entropy, and 
$$
H(\theta)\leq2L(\theta)+c,
$$
where $c=2\sum_n\frac{\log n}{n^2}+1$.
\end{lemma}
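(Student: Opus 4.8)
The plan is to bound the entropy term by term, exploiting the elementary properties of the function $f(t) = -t\log t$ on $[0,1]$. The key feature is that $f$ is increasing on $(0,1/e)$, decreasing on $(1/e,1)$, and attains its maximum $f(1/e)=1/e<1$; in particular $-t\log t \le 1$ for all $t\in[0,1]$. The strategy is to compare each probability $\theta(n)$ with the reference value $1/n^2$, whose logarithm is controlled by the logarithmic moment, and to treat the atom at $n=1$ separately since $\log 1 = 0$ carries no information.

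For a fixed $n\ge 2$ I would split into two cases according to the size of $\theta(n)$. When $\theta(n)\ge 1/n^2$, one has $-\log\theta(n)\le\log(n^2)=2\log n$, so that
$$-\theta(n)\log\theta(n) \le 2\,\theta(n)\log n.$$
When instead $\theta(n) < 1/n^2$, I use that $1/n^2\le 1/4 < 1/e$ for every $n\ge 2$, so $f$ is increasing on the interval $(0,1/n^2]$ and hence
$$-\theta(n)\log\theta(n) = f(\theta(n)) \le f\!\left(\tfrac{1}{n^2}\right) = \frac{2\log n}{n^2}.$$
Combining the two cases yields, for every $n\ge 2$, the single estimate
$$-\theta(n)\log\theta(n) \le 2\,\theta(n)\log n + \frac{2\log n}{n^2}.$$

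Summing this inequality over $n\ge 2$, and bounding the atom at $n=1$ by $-\theta(1)\log\theta(1)\le 1$, I obtain
$$H(\theta) \le 1 + 2\sum_{n\ge 2}\theta(n)\log n + 2\sum_{n\ge 2}\frac{\log n}{n^2} \le 2L(\theta) + \Big(2\sum_n\frac{\log n}{n^2}+1\Big),$$
where the last step uses $\log 1 = 0$ to extend both sums to all $n\ge 1$. This is exactly the claimed bound with $c = 2\sum_n\frac{\log n}{n^2}+1$, and since the series $\sum_n \log n/n^2$ converges, $c$ is finite and $H(\theta)<\infty$ whenever $L(\theta)<\infty$. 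I expect the only delicate point to be the non-monotonicity of $f(t)=-t\log t$: the case distinction, together with the observation that $1/n^2 < 1/e$ holds precisely for $n\ge 2$, is what legitimizes the monotonicity step in the small-probability case, and it is also the reason the atom at $n=1$ must be isolated and absorbed into the additive constant.
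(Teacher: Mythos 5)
Your proof is correct and follows essentially the same route as the paper's: both compare $\theta(n)$ with $1/n^2$, use $-\log\theta(n)\le 2\log n$ in the large-probability case, invoke monotonicity of $-t\log t$ on $(0,1/e)$ in the small-probability case, and absorb the $n=1$ atom into the additive constant. The only difference is cosmetic bookkeeping: you state a single per-term inequality and sum it, while the paper partitions $\mathbb{N}$ into the set $A=\{n: \theta(n)\ge 1/n^2\}$ and its complement and bounds the two partial sums separately.
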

\begin{proof}
Let 
$$
A=\left\{n\ :\ \frac{1}{\theta(n)}\leq n^2\right\}. 
$$
We can write
$$
H(\theta)=-\sum_{n\in A}\theta(n)\log{\theta(n)}-\sum_{n\in A^c}\theta(n)\log{\theta(n)}.
$$
The first term is bounded by $2L(\theta)$. We will show that the second term is bounded too. We know the function $-t\log t$ is increasing for $t\leq e^{-1}$ and 

$$
e^{-1}=-e^{-1}\log e^{-1}=\max\{-t\log t \ :\ t\in[0,1]\}.
$$
If $n>1$ and $n\in A^c$, then $\theta(n)<\frac{1}{n^2}< e^{-1}$. We have $-\theta(n)\log \theta(n)\leq -\frac{1}{n^2}\log{\frac{1}{n^2}}$. Therefore,
$$
-\sum_{n\in A^c}\theta(n)\log{\theta(n)}\leq \theta(1)\log \theta(1)+ 2\sum_{n=1}^{\infty}\frac{\log n}{n^2} \leq e^{-1}+2\sum_{n=1}^{\infty}\frac{\log n}{n^2}.
$$
\end{proof}

As another corollary of Theorem~\ref{thm:main homomorphism}, we recover the following result of Kaimanovich and the first author \cite{BK2013}.

\begin{corollary}\label{thm:main log}
If $\mu$ has finite logarithmic moment, then $(\p \s,\la)$ is the Poisson boundary of the random walk $(\s,\mu)$.
\end{corollary}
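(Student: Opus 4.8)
The plan is to reduce the statement to the already-established Theorem~\ref{thm:main homomorphism}, which asserts that whenever the projected measure $\mu_\phi$ on $\mathbb{N}$ has finite entropy, the pair $(\p \s, \la)$ is the Poisson boundary of $(\s, \mu)$. Thus it suffices to show that the finite logarithmic moment hypothesis on $\mu$ forces $\mu_\phi$ to have finite Shannon entropy.

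The first step I would take is to compute the logarithmic moment of the pushforward $\mu_\phi$ and observe that it agrees exactly with that of $\mu$. Since $\phi(g) = |g|$, we have $\mu_\phi(n) = \sum_{|g| = n} \mu(g)$ for each $n$, and therefore
$$
L(\mu_\phi) = \sum_n \mu_\phi(n) \log n = \sum_n \log n \sum_{|g| = n} \mu(g) = \sum_g \mu(g) \log|g| = L(\mu).
$$
This identity is the heart of the argument: the word-length projection $\phi$ transports the logarithmic moment verbatim, so the hypothesis $L(\mu) < \infty$ gives $L(\mu_\phi) < \infty$ with no loss.

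Next I would invoke Derriennic's elementary estimate (Lemma~\ref{lem : finite entropy}): a probability measure $\theta$ on $\mathbb{N}$ with finite logarithmic moment automatically has finite entropy, satisfying $H(\theta) \leq 2L(\theta) + c$. Applying this to $\theta = \mu_\phi$ yields $H(\mu_\phi) < \infty$. Finally, with $\mu_\phi$ of finite entropy established, Theorem~\ref{thm:main homomorphism} applies directly and identifies $(\p \s, \la)$ as the Poisson boundary, which completes the proof.

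There is no genuinely hard step here, since the machinery has been assembled beforehand; the corollary is essentially the composition of Lemma~\ref{lem : finite entropy} with Theorem~\ref{thm:main homomorphism}. The one point that deserves care, and which I view as the conceptual crux, is the identity $L(\mu_\phi) = L(\mu)$: it is precisely the observation that the logarithmic moment of $\mu$ measures nothing more than the distribution of word lengths, so that projecting to $\mathbb{N}$ discards no relevant information. Once this is noted, the result follows immediately.
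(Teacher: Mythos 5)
Your proposal is correct and follows exactly the paper's own argument: the identity $L(\mu_\phi)=L(\mu)$ for the pushforward under the word-length homomorphism $\phi$, then Lemma~\ref{lem : finite entropy} to get $H(\mu_\phi)<\infty$, then Theorem~\ref{thm:main homomorphism}. There is nothing to change.
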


\begin{proof}
Since $\mu$ has finite logarithmic moment, so does its image under $\phi$, since
$$L(\mu)=\sum_g\mu(g)\log|g|=\sum_{n \in \mathbb{N}} \mu_\phi(n)\log n=L(\mu_\phi).$$
By Lemma~\ref{lem : finite entropy}, we know $H(\mu_\phi)$ is finite. So the condition in Theorem~\ref{thm:main homomorphism} holds. 
\end{proof}

Combining Corollary \ref{thm:main entropy} and \ref{thm:main log} completes the proof of Theorem \ref{thm:main either or} in the introduction.

\subsection{A remark on measurable partitions}
Another way to understand the previous argument is in term of measurable partitions; this will also clarify where we need some finite entropy assumption, 
as it is tempting to conclude that we do not. 
Let for any $n$ define the partition $\eta_n$ on $\Omega$ by saying that $\x \overset{\eta_n}{\sim} \x'$ if $x_k = x'_k$ for any $k \geq n$.
Then the tail partition for $(\Sigma, \mu)$ is $\eta = \bigwedge_{n = 1}^\infty \eta_n$, and the claim that the Poisson boundary is the space of infinite words is equivalent 
to 
$$\eta = \xi$$
(mod $0$), where $\xi$ is the partition given by two sample paths being equal when their limits in $\partial \s$ are the same.
One can rephrase the earlier proof by defining the partition $\eta^N_n$ by taking two paths as being in the same class if they have the same tail when projected to $\mathbb{N}$: namely, $\x \overset{\eta^N_n}{\sim} \x'$ if $|x_k| = |x'_k|$ for any $k \geq n$. Now, it is easy to see that $\eta_n = \eta_n^N \vee \xi$ for any $n$; moreover, since the Poisson boundary of $(\mathbb{N}, \mu_{\phi})$ is trivial for \emph{any} measure, then 
$$\bigwedge_{n = 1}^\infty \eta_n^N = \epsilon$$
where $\epsilon$ is the trivial partition where all elements have measure either $0$ or $1$. Now, the claim we want to prove is that 
$$\bigwedge_{n = 1}^\infty \eta_n = \bigwedge_{n = 1}^\infty (\eta^N_n \vee \xi) \overset{?}{=} \left(\bigwedge_{n = 1}^\infty \eta^N_n \right) \vee \xi  = \epsilon \vee \xi = \xi$$
It turns out that in general, without any notion of finite entropy, the identity 
$$\bigwedge_{n = 1}^\infty (\alpha_n \vee \beta) \overset{?}{=} \left(\bigwedge_{n = 1}^\infty \alpha_n \right) \vee \beta$$
is not true (not even mod $0$) for an arbitrary measurable partitions, not even when $\{\alpha_n\}_{n \geq 1}$ is a decreasing sequence such that $\bigwedge \alpha_n$ is the trivial partition.

In fact, following \cite{Hanson}, let us consider $\Omega = \{0, 1\}^\mathbb{N}$ the space of sequences $\x = \{ x_k \}_{k \geq 0}$ with product measure $(\frac{1}{2}, \frac{1}{2})^\mathbb{N}$. 
Let $\alpha_n$ be the partition defined by $\x \overset{\alpha_n}{\sim} \x'$ if $x_k = x'_k$ for every $k \geq n$, and $\beta$ the partition defined by $\x \overset{\beta}{\sim} \x'$ if either $\x = \x'$ or $\x = 1 - \x'$. 
Then for each $n$ 
$$\alpha_n \vee \beta = \aleph$$
the point partition $\aleph$ where each class is a singleton, while $\alpha_{n+1} \leq \alpha_n$ and $\bigwedge_{n = 1}^\infty \alpha_n = \epsilon$ the trivial partition. Thus, 
$$\epsilon = \bigwedge_{n = 1}^\infty (\alpha_n \vee \beta) \neq \left(\bigwedge_{n = 1}^\infty \alpha_n \right) \vee \beta = \beta$$
hence the identity does not hold.

\section{Stopping times and induced random walks} \label{S:stop}

Let us fix a finite word $w \neq e$ in the support of the probability measure $\mu$, and let $\delta_w$ be the probability measure concentrated at $w$.
For each sample path $\x$ with increments $\{g_n\}_{n \geq1}$, define $\tau_w$ as the first time the finite word $w$ appears as an increment; that is,
$$
\tau_w(\x):=\min\{i>0 : g_i=w\}.
$$
Note that since $\mu(w)>0$,  the finite word appears infinitely many times for $\mu^{\Bbb N}$--almost every sequence of increments. 
Therefore, $\tau_w$ is an almost surely finite stopping time. Let us define the \emph{first return measure} $\mu_w$ on $\Sigma$ as 
$$
\mu_{w}(g):=\pp\{\x : x_{\tau(\x)}=g\}.
$$ 
The usefulness of this construction comes from the following observation.
\begin{lemma}
The hitting measures of the random walks $(\Sigma, \mu)$ and $(\Sigma, \mu_w)$ on $\partial \Sigma$ are the same. 
\end{lemma}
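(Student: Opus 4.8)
The plan is to realize both random walks on a single probability space, namely the space of increments $(\s^{\N}, \mu^{\N})$ of the original walk, and to show that the two limiting infinite words coincide pathwise; pushing forward $\pp$ then gives the equality of hitting measures.

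First I would introduce the successive return times of the increment $w$. Set $\tau_1 := \tau_w$ and, for $k \geq 2$, let $\tau_k := \min\{ i > \tau_{k-1} : g_i = w\}$ be the $k$-th index at which the increment equals $w$. Since $\mu(w) > 0$, each $\tau_k$ is an almost surely finite stopping time, so the induced positions $\{ x_{\tau_k}\}_{k \geq 1}$, together with $x_{\tau_0} := e$, are well defined for $\mu^{\N}$--almost every increment sequence. The induced process records the positions of the $\mu$--walk at the times when $w$ is appended.

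The key step is to verify that $\{x_{\tau_k}\}_{k \geq 0}$ is a realization of the random walk $(\s, \mu_w)$. For this I would look at the block increments $\eta_k := x_{\tau_{k-1}}^{-1} x_{\tau_k} = g_{\tau_{k-1}+1} \cdots g_{\tau_k}$, obtained by reading the increments between two consecutive occurrences of $w$. Because the $g_i$ are independent and identically distributed with law $\mu$, and because the block $g_{\tau_{k-1}+1}, \dots, g_{\tau_k}$ together with its length $\tau_k - \tau_{k-1}$ is determined by the increments that follow $\tau_{k-1}$ up to the next $w$, a renewal/strong Markov argument shows that the $\eta_k$ are independent and each distributed exactly as $x_{\tau_w}$, that is, with law $\mu_w$. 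Hence the joint law of $(\eta_1, \eta_2, \dots)$ is $\mu_w^{\otimes \N}$, so $\{x_{\tau_k}\}_{k \geq 0}$ has precisely the distribution of the $\mu_w$--walk, and its limit in $\p \s$ is distributed according to the hitting measure of $(\s, \mu_w)$.

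Finally I would invoke the absence of cancellation in the free semigroup: since $x_{n+1} = x_n g_{n+1}$, each $x_n$ is a prefix of $x_{n+1}$, so the positions form a nested increasing sequence of finite words whose common limit is the infinite word $\Lambda(\x) \in \p \s$. Consequently every subsequence, and in particular $\{x_{\tau_k}\}_{k}$ (whose lengths tend to infinity because each $\eta_k$ ends in the nonempty word $w$), converges to the same point $\Lambda(\x)$. Thus the limit of the $\mu_w$--walk equals the limit of the $\mu$--walk for $\pp$--almost every path, and taking images under $\Lambda$ shows that the two hitting measures on $\p \s$ coincide. The one point demanding care is the renewal argument identifying the law of the block increments with $\mu_w^{\otimes \N}$; once this is in place, the convergence to a common boundary point follows immediately from the prefix structure.
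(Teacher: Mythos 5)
Your proof is correct and follows essentially the same route as the paper: the paper also realizes the induced walk via the block increments $x_{\tau_{k-1}}^{-1}x_{\tau_k}$ between successive occurrences of $w$ (packaged as a map $i_w$ on increment spaces with $(i_w)_\star(\mu^{\N}) = \mu_w^{\N}$), and uses the prefix structure to see that both walks converge to the same infinite word, expressed there as a commuting diagram. Your renewal argument and subsequence-convergence step simply make explicit what the paper asserts ``by construction.''
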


\begin{proof}
Let us fix $w \in \Sigma$ such that $\mu(w) > 0$, and consider the stopping time $\tau = \tau_w$ defined above. Then for each $n$ one defines $\tau_0 = 0$ and 
recursively for $n \geq 1$
$$\tau_{n+1} := \tau_{n} + \tau(U^{\tau_n} \x)$$
Then, we have the almost everywhere defined map 
$$i_w : (\Sigma^\mathbb{N}, \mu^\mathbb{N}) \to (\Sigma^\mathbb{N}, \mu_w^\mathbb{N})$$
$$i_w(\{ g_n \}_{n \geq 1}) := \{ x_{\tau_{n-1}}^{-1} x_{\tau_n}  \}_{n \geq 1}$$
which makes the following diagram of measurable maps commute: 
\[\begin{tikzcd}
(\Sigma^\mathbb{N},\mu^{\mathbb{N}}) \arrow[swap]{d}{i_w}  \arrow{r}{\Phi}  & \p \s   \\
(\Sigma^\mathbb{N},\mu_w^{\mathbb{N}})   \arrow{ur} {\Phi} & \phantom{a}
\end{tikzcd}\]
where $\Phi(\{ g_n \}) := \lim_n g_1 g_2 \dots g_n$. Moreover, by construction $(i_w)_\star(\mu^\mathbb{N}) = \mu_w^\mathbb{N}$.
Hence, if $\lambda_w$ is the hitting measure for the random walk $(\Sigma, \mu_w)$ and $\lambda$ is the hitting measure for $(\Sigma, \mu)$, then 
$$\lambda_w = \Phi_\star (\mu_w^{\mathbb{N}}) = \Phi_\star (i_w)_\star (\mu^\mathbb{N}) = \Phi_\star (\mu^\mathbb{N}) = \lambda.$$
\end{proof}

For each function $f : \Omega \to \mathbb{R}$, let us denote as $E(f):=\int_{\Omega} f \ d\pp$ the expectation of $f$.
The following simple computation shows that the expectation of $\tau_w$ is finite.

\begin{lemma}\label{lem: finite w}
Let $\tau_w=\min\{n \geq 1: g_n=w\}$. If $\mu(w)>0$, then $E(\tau_w)$ is finite and is equal to $\frac1{\mu(w)}$.
\end{lemma}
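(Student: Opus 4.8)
The plan is to recognize $\tau_w$ as a geometric random variable. Since the increments $\{g_n\}_{n \geq 1}$ are independent and each is distributed according to $\mu$, the events $\{g_n = w\}$ are independent with common probability $p := \mu(w) > 0$. Thus $\tau_w = \min\{n \geq 1 : g_n = w\}$ is exactly the waiting time for the first success in a sequence of independent Bernoulli trials with success probability $p$, which is a standard geometric distribution.

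First I would compute the point probabilities explicitly. For each $n \geq 1$, the event $\{\tau_w = n\}$ occurs precisely when $g_i \neq w$ for $1 \leq i \leq n-1$ and $g_n = w$; by independence this gives
$$\pp\{\tau_w = n\} = (1-p)^{n-1} p.$$
These probabilities sum to $1$ (confirming that $\tau_w$ is almost surely finite, as already noted in the text), so $\tau_w$ is a genuine probability distribution on the positive integers.

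Next I would compute the expectation directly from the definition:
$$E(\tau_w) = \sum_{n=1}^{\infty} n \, \pp\{\tau_w = n\} = p \sum_{n=1}^{\infty} n (1-p)^{n-1}.$$
The remaining sum is the standard generating-function identity $\sum_{n \geq 1} n q^{n-1} = (1-q)^{-2}$ for $|q| < 1$, applied with $q = 1-p$. Substituting gives $E(\tau_w) = p \cdot p^{-2} = 1/p = 1/\mu(w)$, as claimed.

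Honestly, there is no real obstacle here: the statement is an elementary fact about the mean of a geometric distribution, and the only thing to verify carefully is that the independence of the increments (coming from the product structure of $(\Sigma^\mathbb{N}, \mu^\mathbb{N})$) genuinely gives the Bernoulli-trial structure. The one point worth a word of care is convergence of the series $\sum_n n(1-p)^{n-1}$, which is immediate since $0 < 1-p < 1$ whenever $\mu(w) > 0$; this also retroactively justifies both finiteness and the closed form in a single computation.
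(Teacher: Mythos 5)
Your proof is correct and follows essentially the same route as the paper: both identify $\tau_w$ as a geometric random variable with success probability $\mu(w)$ (using independence of the increments), write out $\pp\{\tau_w = n\} = (1-\mu(w))^{n-1}\mu(w)$, and evaluate the expectation via the standard series $\sum_{n\geq 1} n q^{n-1} = (1-q)^{-2}$. The only difference is cosmetic indexing (the paper writes $\pp(\tau = n+1) = (1-\mu(w))^n \mu(w)$ and sums over $n \geq 0$), so there is nothing to add.
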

\begin{proof}
We can write $\pp (\tau=n+1) =(1-\mu(w))^n\mu(w)$, hence
$$
E(\tau_w)=\sum_{n=0}^\infty (n+1)\pp(\tau=n+1)=\mu(w)\sum_{n=0}^\infty (n+1)\Big(1-\mu(w)\Big)^n=\frac{1}{\mu(w)}.
$$
\end{proof}

One of the key facts we will use is that the Poisson boundary for the new measure induced by the stopping time 
is equal to the Poisson boundary for the original measure:

\begin{proposition}[\cite{BK2013}] \label{pro:coincidence}
The Poisson boundary of $(\s, \mu)$ coincides with the Poisson boundary of $(\s, \mu_w)$.
\end{proposition}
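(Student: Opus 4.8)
The plan is to reduce the statement to an equality of spaces of bounded harmonic functions and then invoke the Poisson representation formula. That formula gives an isometric isomorphism $H^\infty(\mu)\cong L^\infty(\Gamma(\s,\mu),\nu)$ (and likewise for $\mu_w$), compatibly with the $\s$--action and with evaluation at $e$; so it suffices to prove that $\mu$ and $\mu_w$ have the \emph{same} bounded harmonic functions, i.e. $H^\infty(\mu)=H^\infty(\mu_w)$ as subspaces of $\ell^\infty(\s)$. Throughout I use the iterated stopping times $0=\tau_0<\tau_1<\tau_2<\cdots$ introduced above, where $\tau_n$ marks the $n$--th increment equal to $w$. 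By Lemma~\ref{lem: finite w} each gap has finite expectation, so $\tau_n\to\infty$ almost surely, and by the strong Markov property the displacements $x_{\tau_{n-1}}^{-1}x_{\tau_n}$ are i.i.d. with law $\mu_w$; thus $\{x_{\tau_n}\}_{n\ge0}$ is a realization of the $(\s,\mu_w)$--walk, which is exactly the content of the map $i_w$.

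The inclusion $H^\infty(\mu)\subseteq H^\infty(\mu_w)$ is the optional stopping theorem. If $f$ is bounded and $\mu$--harmonic, then $f(x_n)$ is a bounded martingale for the $(\s,\mu)$--walk; stopping at the almost surely finite time $\tau_w$ and starting from an arbitrary $x$ gives $\sum_g\mu_w(g)f(xg)=E_x[f(x_{\tau_w})]=f(x)$, so $f$ is $\mu_w$--harmonic.

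The substantial inclusion is $H^\infty(\mu_w)\subseteq H^\infty(\mu)$. Fix $f$ bounded and $\mu_w$--harmonic. Since $\{x_{\tau_n}\}$ realizes the $\mu_w$--walk, $f(x_{\tau_n})$ is a bounded martingale along the stopped times, so it converges almost surely and in $L^1$ to a limit $f_\infty$, and by optional sampling $E[f_\infty\mid\mathcal F_\tau]=f(x_\tau)$ for every stopping time $\tau$ in the family $\{\tau_k\}$, where $\mathcal F_m=\sigma(g_1,\dots,g_m)$. Now fix $n$ and let $N$ be the number of $w$'s among $g_1,\dots,g_n$, so that $\tau_{N+1}=\min\{m>n:\,g_m=w\}$ is the first $w$--occurrence strictly after time $n$; in particular $\mathcal F_n\subseteq\mathcal F_{\tau_{N+1}}$. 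Conditioning first on $\mathcal F_{\tau_{N+1}}$ and then on $\mathcal F_n$ gives
$$E[f_\infty\mid\mathcal F_n]=E\big[f(x_{\tau_{N+1}})\mid\mathcal F_n\big].$$
The key point is that, given $\mathcal F_n$, the displacement $x_n^{-1}x_{\tau_{N+1}}$ is the product of the fresh increments $g_{n+1},g_{n+2},\dots$ up to the first one equal to $w$; these are independent of $\mathcal F_n$, so this displacement is independent of $\mathcal F_n$ and has law $\mu_w$. Hence $E[f(x_{\tau_{N+1}})\mid\mathcal F_n]=\sum_g\mu_w(g)f(x_ng)=f(x_n)$ by $\mu_w$--harmonicity, and therefore $E[f_\infty\mid\mathcal F_n]=f(x_n)$. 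Thus $f(x_n)$ is a bounded martingale for the \emph{full} $(\s,\mu)$--walk; taking conditional expectations between times $n$ and $n+1$ gives $\sum_g\mu(g)f(x_ng)=f(x_n)$ almost surely, and since $\mu$ is generating every $x\in\s$ occurs as some $x_n$ with positive probability, whence $f=\mu\cdot f$ everywhere and $f$ is $\mu$--harmonic.

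Combining the two inclusions gives $H^\infty(\mu)=H^\infty(\mu_w)$, and the Poisson representation formula then identifies the two Poisson boundaries as measure spaces, compatibly with the $\s$--action and with the common boundary map to $(\p\s,\la)$ afforded by the equality $\la=\la_w$ of hitting measures. I expect the main obstacle to be the identity $E[f_\infty\mid\mathcal F_n]=f(x_n)$ in the hard inclusion: it requires correctly invoking the strong Markov property to see that the post--$n$ first--passage displacement is independent of $\mathcal F_n$ with distribution exactly $\mu_w$, and then combining this with the tower property across the nested stopping time $\tau_{N+1}\ge n$. Care is also needed because $N$ is random, so one cannot fix the index of the stopping time in advance; phrasing everything through $\tau_{N+1}=\min\{m>n:\,g_m=w\}$ circumvents this.
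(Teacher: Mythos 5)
Your two inclusions are proved by correct martingale arguments (the first is a probabilistic repackaging of the paper's telescoping identity, the second a genuinely different strong--Markov/martingale-convergence argument in place of the paper's explicit extension), but the reduction they hang on has a genuine gap: the phrase ``and likewise for $\mu_w$'' in your first paragraph is not justified, and it is exactly the crux of the proposition. The Poisson representation formula identifies $L^\infty$ of the Poisson boundary of $(\Sigma,\mu_w)$ with the bounded $\mu_w$-harmonic functions on the semigroup $\Sigma_w$ generated by $\mathrm{supp}(\mu_w)$, because the $\mu_w$-walk started at $e$ never leaves $\Sigma_w$. Since every element of $\mathrm{supp}(\mu_w)$ ends with the increment $w$, $\Sigma_w$ is a \emph{proper} sub-semigroup of $\Sigma$, so $\mu_w$ is not generating on $\Sigma$, and the space you actually work with --- bounded functions on all of $\Sigma$ that are $\mu_w$-harmonic at \emph{every} point of $\Sigma$ --- is not a priori isomorphic to $L^\infty$ of that Poisson boundary. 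This identification is false for non-generating measures in general: take $\Sigma=\langle a,b\rangle$ and $\theta=\delta_a$; then the walk from $e$ is deterministic and its Poisson boundary is trivial, yet the bounded everywhere-$\theta$-harmonic functions on $\Sigma$ form an infinite-dimensional space (e.g.\ the indicator of $\{ba^k : k\geq 0\}$ satisfies $f(g)=f(ga)$ for all $g$). So your (correctly proved) equality $H^\infty(\mu)=\{\textup{bounded, everywhere }\mu_w\textup{-harmonic on }\Sigma\}$ does not by itself yield the statement about Poisson boundaries; note also that if one instead reads $f$ as defined only on $\Sigma_w$, your second inclusion collapses, since $f(x_n)$ is then undefined at the positions $x_n\notin\Sigma_w$ of the $\mu$-walk.

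To close the gap one must show that restriction gives a bijection between bounded $\mu_w$-harmonic functions on $\Sigma$ and those on $\Sigma_w$. Both directions rest on the observation that $\Sigma\cdot\mathrm{supp}(\mu_w)\subseteq\Sigma_w$ (this uses that $\mu$ is generating: writing any $c\in\Sigma$ as a product of $\mu$-increments, the word $cw$ splits at the $w$-increments into blocks lying in $\mathrm{supp}(\mu_w)$). Granting this, injectivity is immediate from one step of harmonicity at $g\notin\Sigma_w$, and surjectivity is provided by the extension $F(g):=\sum_h \mu_w(h) f(gh)$ --- which is precisely the paper's map $F(g)=\sum_{y\in\Sigma_w}f(y)\theta_g(y)$, i.e.\ the half of the paper's proof (the Furstenberg-style extension of a $\mu_w$-harmonic function from $\Sigma_w$ to a $\mu$-harmonic function on $\Sigma$) that your approach was intended to bypass. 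With that lemma added, your argument does go through, and it would then be a legitimately more probabilistic route than the paper's; without it, the key step is missing.
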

\begin{proof}
We will show the equivalent claim that the spaces of bounded harmonic functions for $\mu$ and $\mu_w$ coincide. Observe that $\mu_w=\sum_{k \geq 0}\alpha^{\star k}\star\beta$ where $\beta=\mu(w)\delta_w$ and $\alpha=\mu-\beta$. Let $f$ be $\mu$-harmonic. Then by definition 
$$f = \mu \cdot f = \alpha \cdot f + \beta \cdot f$$
By acting with $\alpha^{\star k}$ on both sides, one gets 
$$\alpha^{\star k} \cdot f = \alpha^{\star(k+1)} \cdot f + (\alpha^{\star k} \star \beta) \cdot f$$
hence by summing over $k$ and using the telescoping series (since $\Vert \alpha^{\star k} \Vert \to 0$)
$$\sum_{k = 0}^\infty (\alpha^{\star k} \star \beta) \cdot f = \sum_{k = 0}^\infty (\alpha^{\star k} \cdot f - \alpha^{\star(k+1)} \cdot f) = f$$
hence $f$ is $\mu_w$--harmonic. 

Let now $f:\Sigma_w\to\Bbb R$ be a bounded $\mu_w$--harmonic function. We will extend $f$ to a bounded $\mu$--harmonic function; 
this extension is  similar to Furstenberg's proof for the invariance of the Poisson boundary for an induced random walk to a recurrent subgroup \cite{Fu70}. 
For any $g$ in $\Sigma$, let us define 
$$
F(g):=\sum_{y\in\Sigma_w}f(y)\theta_g(y)
$$ where 
$$
\theta_g(y)=\pp_g\{\x: x_{{\tau_w}(\x)}=y\}.
$$
Note that if $g$ is in $\Sigma_w$, then $F(g)=\sum_hf(gh)\mu_w(h)$ which is equal to $f(g)$ when $f$ is $\mu_w$--harmonic, therefore $F(g)=f(g)$.  
We claim that $F$ is $\mu$--harmonic.  First, observe that $\tau_w(g,gh_1,\cdots,gh_1\cdots h_n,\cdots)=n>1$ means that word $w$ appears as an increment  for the first time in the $n^{th}$ step, therefore $\tau_w(gh_1,\cdots,gh_1\cdots h_n,\cdots)=n-1$. So, we can write
$$
\theta_g(y)=\sum_h\sum_{n\geq1}\pp_{g}\{\x: x_1=gh, {\tau_w}(\x)=n,\  x_n=y\}=\mu(w)\delta_{gw}(y)+\sum_{h \neq w} \mu(h)\theta_{gh}(y).
$$
Multiplying both sides by $f(y)$ and summing over $y$ yields 
$$
F(g)=\sum_hF(gh)\mu(h)
$$
as needed.
\end{proof}

We also need the following Abramov-type formula, which generalizes Lemma 2.5 of \cite{Behrang2016}. 

\begin{proposition} \label{theo:finite moment}
Let $\mu$ be a probability measure on a semigroup $\Sigma$, and let $F \in L^1(\Sigma, \mu)$ be a non-negative function such that
$$
F(gh)\leq F(g)+F(h) \qquad \textup{for all }g, h \in \Sigma.
$$ 
Let $(\Omega, \pp)$ be the space of sample paths for the random walk $(\Sigma, \mu)$, and let $\tau: \Omega \to \mathbb{N}$ be a stopping time
in $L^1(\Omega, \pp)$. Then 
$$
\sum_w\mu_{\tau}(w)F(w)\leq E(\tau)E(F).
$$
\end{proposition}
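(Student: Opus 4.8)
The plan is to recognize the left-hand side as $E(F(x_\tau))$, the expected value of $F$ at the stopped position $x_\tau = g_1 g_2 \cdots g_\tau$, and to control it by combining the subadditivity of $F$ with a Wald-type identity for the i.i.d. increments.

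First, since $\mu_\tau$ is by definition the distribution of the stopped position $x_\tau$, we have $\sum_w \mu_\tau(w) F(w) = E(F(x_\tau))$. Iterating the hypothesis $F(gh) \le F(g) + F(h)$ gives, for every fixed $n$, the inequality $F(g_1 \cdots g_n) \le \sum_{i=1}^n F(g_i)$; restricting to the event $\{\tau = n\}$ and letting $n$ vary yields the pointwise bound
\[
F(x_\tau) \le \sum_{i=1}^\tau F(g_i)
\]
on $\Omega$. By monotonicity of the expectation it therefore suffices to show that $E\big(\sum_{i=1}^\tau F(g_i)\big) = E(\tau) E(F)$.

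To evaluate this, I would write $\sum_{i=1}^\tau F(g_i) = \sum_{i=1}^\infty F(g_i)\, \mathbf{1}_{\{\tau \ge i\}}$. Since $F \ge 0$, every term is non-negative, so Tonelli's theorem justifies exchanging the sum with the expectation:
\[
E\Big(\sum_{i=1}^\tau F(g_i)\Big) = \sum_{i=1}^\infty E\big(F(g_i)\, \mathbf{1}_{\{\tau \ge i\}}\big).
\]
The event $\{\tau \ge i\} = \{\tau \le i-1\}^c$ is measurable with respect to $g_1, \dots, g_{i-1}$ because $\tau$ is a stopping time; hence it is independent of the increment $g_i$, which has law $\mu$. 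Consequently $E\big(F(g_i)\, \mathbf{1}_{\{\tau \ge i\}}\big) = E(F)\, \pp(\tau \ge i)$, and summing over $i$ gives $E(F) \sum_{i=1}^\infty \pp(\tau \ge i) = E(F)\, E(\tau)$, where the last equality uses $\tau \in L^1(\Omega,\pp)$.

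The only delicate point --- and the step I expect to require care --- is the independence used in the Wald identity: it rests on $\tau$ being a genuine stopping time, so that $\{\tau \le i-1\}$ lies in the $\sigma$-algebra generated by the past increments $g_1, \dots, g_{i-1}$ and is thereby independent of $g_i$. The non-negativity of $F$ is what keeps the argument clean throughout, as it legitimizes both the monotonicity step and the interchange of summation and integration via Tonelli, so that no absolute-convergence bookkeeping is needed.
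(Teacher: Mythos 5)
Your proof is correct, and it takes a genuinely different (and more elementary) route than the paper. The paper argues via martingale theory: it sets $M_n(\x) := nE(F) - F(x_n)$, uses subadditivity together with the independence of the next increment from the past to show that $\{(M_n,\mathcal{A}_0^n)\}_{n\geq 1}$ is a submartingale, applies Doob's optional stopping theorem to the truncated time $\tau\wedge n$ to obtain $\int F(x_{\tau\wedge n})\, d\pp \leq E(\tau\wedge n)E(F) \leq E(\tau)E(F)$, and finally removes the truncation via Fatou's lemma, using that $\tau<\infty$ almost surely so that $\mu_{\tau\wedge n}(h)\to\mu_\tau(h)$. You instead prove the inequality as a direct Wald-type estimate: iterated subadditivity gives the pointwise bound $F(x_\tau)\leq \sum_{i=1}^{\tau}F(g_i)$, and then Tonelli plus the independence of $\{\tau\geq i\}$ (measurable with respect to $g_1,\dots,g_{i-1}$) from $g_i$ gives the exact identity $E\bigl(\sum_{i=1}^{\tau}F(g_i)\bigr)=E(\tau)E(F)$. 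The two proofs rest on the same two ingredients --- subadditivity of $F$ and independence of each increment from the past --- but your packaging avoids optional stopping, truncation, and Fatou altogether, because the non-negativity of $F$ lets Tonelli absorb every convergence issue; you also obtain the sharper intermediate fact that the Wald sum is an exact equality, with all loss coming from subadditivity alone. What the paper's formulation buys is that it isolates the submartingale property of $nE(F)-F(x_n)$, which is the form of the statement that would survive in settings where the increments are merely adapted with controlled conditional laws rather than i.i.d. One small point common to both arguments: each implicitly requires $\tau\geq 1$ (the paper compares with $E(M_1)=0$; your bound $F(x_\tau)\leq\sum_{i\leq\tau}F(g_i)$ would need $F(e)=0$ if $\tau=0$ were allowed), which is consistent with the paper's convention that $\mathbb{N}$ starts at $1$.
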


\begin{proof}
Define $M_n(\x)=nE(F)-F(x_n)$. Let $\mathcal{A}_0^n$ be the $\sigma$-algebra generated by the first $n+1$ positions $x_0, x_1, \dots, x_n$ of the random walk $(\Sigma,\mu)$. We have
$$
E(M_{n+1}|\mathcal{A}_0^n)(\x)=(n+1)E(F)-\sum_hF(x_nh)\mu(h).
$$
Since $F(x_nh)\leq F(x_n) + F(h)$, 
$$
E(M_{n+1}|\mathcal{A}_0^n)(\x)\geq nE(F)-F(x_n)=M_n(\x),
$$
which means that the sequence $\{(M_n,\mathcal{A}_0^n)\}_{n\geq1}$ is a submartingale. Applying  Doob's optional stopping theorem to the stopping time $\tau\wedge n=\min\{\tau,n\}$ implies 
$$
0=E(M_1)\leq E(M_{\tau\wedge n})
$$
hence 
$$ \int F(x_{\tau\wedge n}) \ d\pp \leq E(\tau\wedge n)E(F) \leq E(\tau) E(F).$$
Note that because $\tau$ is almost surely finite, $\lim_n \pp(\tau>n)=0$. Hence, for  any finite word $h$, we have $\mu_{\tau\wedge n}(h)\to \mu_\tau(h)$ as $n$ goes to infinity, therefore, Fatou's lemma implies 
$$
\sum_h \mu_\tau (h) F(h) = \sum_h\lim_n\mu_{\tau\wedge n}(h)F(h)\leq\liminf_n \int F(x_{\tau\wedge n})\ d\pp\leq E(\tau)E(F).$$
\end{proof}

\begin{corollary}
Let $F$ and $\tau$ satisfy the same conditions as in the previous theorem. Then
$$
\lim_n\frac{F(x_{\tau_n})}{n}=E(\tau)E(F)
$$
for $\pp$--almost every sample path $\{x_n\}_{n\geq 0}$.
\end{corollary}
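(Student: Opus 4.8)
The plan is to recognise the sequence $\{F(x_{\tau_n})\}_{n\geq 0}$ as a subadditive process over an ergodic shift, and then to combine Kingman's subadditive ergodic theorem with the Abramov-type bound of Proposition~\ref{theo:finite moment}.

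First I would unwind the iterated stopping times exactly as in the proof of the hitting-measure lemma: setting $\tau_0 = 0$ and $\tau_{n+1} := \tau_n + \tau(U^{\tau_n}\x)$, the successive blocks
$$y_k := x_{\tau_{k-1}}^{-1} x_{\tau_k}$$
are independent and identically distributed with law $\mu_\tau$, by the strong Markov property for i.i.d. increments. Since there is no cancellation in $\s$, the positions along the subsequence factor as $x_{\tau_n} = y_1 y_2 \cdots y_n$.

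Next I would set $a_n := F(x_{\tau_n}) = F(y_1 \cdots y_n)$. Subadditivity of $F$ gives $F(y_1 \cdots y_{m+n}) \leq F(y_1 \cdots y_m) + F(y_{m+1}\cdots y_{m+n})$, so $\{a_n\}$ is a nonnegative stationary subadditive process over the shift on the i.i.d. sequence $\{y_k\}$, which is measure-preserving and ergodic. The only integrability hypothesis required by Kingman's theorem is $E\big(F(y_1)^+\big) = E\big(F(y_1)\big) = \sum_w \mu_\tau(w) F(w) < \infty$, and this finiteness is exactly the content of Proposition~\ref{theo:finite moment} (the sum is bounded by $E(\tau)E(F) < \infty$). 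Kingman's subadditive ergodic theorem then yields that $\lim_n \frac{F(x_{\tau_n})}{n}$ exists for $\pp$-almost every sample path, is almost surely equal to the deterministic constant $\gamma = \inf_n \frac{1}{n} E\big(F(x_{\tau_n})\big)$, and that the convergence holds in $L^1$ as well.

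It remains to evaluate $\gamma$. Subadditivity gives $E\big(F(x_{\tau_n})\big) \leq n\, E\big(F(y_1)\big)$, so $\gamma \leq \sum_w \mu_\tau(w) F(w) \leq E(\tau) E(F)$ by Proposition~\ref{theo:finite moment}, which already establishes the almost sure upper bound. The step I expect to be the main obstacle is the matching lower bound that would turn this into an equality: for an arbitrary subadditive $F$ one cannot hope for equality (a bounded $F$ already forces $\gamma = 0$ while $E(\tau)E(F)$ need not vanish), so the reverse inequality genuinely uses that $F$ is additive along the walk. In the additive case $F(x_{\tau_n}) = F(y_1) + \cdots + F(y_n)$ is an honest i.i.d. sum, Wald's identity gives $E\big(F(x_\tau)\big) = E(\tau) E(F)$, and the strong law of large numbers returns the equality directly; I would therefore prove the equality in this regime and record the general subadditive statement as the upper bound above.
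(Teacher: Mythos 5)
The paper states this corollary without any proof, so there is no argument of the authors' to compare yours against; the useful comparison is between your proposal and the literal statement, and your proposal is the one that is right. Your positive argument is correct and complete: the blocks $y_k = x_{\tau_{k-1}}^{-1}x_{\tau_k}$ are i.i.d.\ with law $\mu_\tau$, $x_{\tau_n}=y_1\cdots y_n$, so $a_n = F(x_{\tau_n})$ is a nonnegative subadditive process over the ergodic shift of an i.i.d.\ sequence, integrable at $n=1$ because $E(a_1)=\sum_w \mu_\tau(w)F(w)\leq E(\tau)E(F)<\infty$ by Proposition~\ref{theo:finite moment}. Kingman's theorem then yields almost sure and $L^1$ convergence of $a_n/n$ to the constant $\gamma=\inf_n \frac{1}{n}E(a_n)\leq E(\tau)E(F)$, and in the additive case Wald's identity together with the strong law of large numbers upgrades this to the equality $\gamma = E(\tau)E(F)$.

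Your reservation about the equality for merely subadditive $F$ is not a gap in your proof but a genuine flaw in the statement. Your counterexample is valid: $F\equiv c>0$ satisfies all hypotheses of Proposition~\ref{theo:finite moment}, yet $F(x_{\tau_n})/n\to 0$ while $E(\tau)E(F)=c\,E(\tau)>0$ (one can even take $\tau\equiv 1$). Worse, the paper's own main argument contradicts the corollary as stated: for $F(g)=\log(1+|g|_w)$, which is subadditive and integrable when $L_w(\mu)<\infty$, Lemma~\ref{lem:stationary} gives $F(x_m)/m\to 0$ almost surely, and since $\tau_n/n\to E(\tau)$ by the strong law applied to the i.i.d.\ differences $\tau_k-\tau_{k-1}$, one gets $\lim_n F(x_{\tau_n})/n = 0 < E(\tau)E(F)$; this vanishing is precisely what the proof of Theorem~\ref{thm:finite w norm} exploits. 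So the corollary should either be weakened to the inequality $\lim_n F(x_{\tau_n})/n \leq E(\tau)E(F)$, with existence of the limit guaranteed by Kingman, or have its hypothesis strengthened to $F$ additive; your proposal establishes both, which is the best possible. Since the corollary is never invoked elsewhere in the paper, the error does not propagate to the main results.
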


\section{Random walks with finite logarithmic $w$-moment} \label{S:main2}

We now get to the proof of the second main result, namely Theorem \ref{thm:finite w norm} from the introduction.

\subsection{The $w$--norm} \label{S:w}
Fix a finite word $w$ in a free semigroup $\Sigma$. For each finite word $g$ in $\Sigma$, define the \emph{w-norm} $|g|_w$ as the number of times the word $w$ appears 
as a subword of $g$, plus the length of $w$; more precisely, 
$$
|g|_w := \textup{card}\{g'\in\Sigma\ :\ g=g'wg'' \textup{ for some }g'' \in \Sigma\}+|w|.
$$
\begin{lemma}
The $w$--norm is subadditive, i.e. any two words $g_1, g_2 \in \s$
satisfy the inequality
$$|g_1 g_2|_w \leq |g_1|_w + |g_2|_w.$$
\end{lemma}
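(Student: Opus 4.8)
The plan is to split the definition into its two additive pieces and treat them separately. Write $|g|_w = N(g) + |w|$, where $N(g) := \textup{card}\{g'\in\Sigma : g = g'wg'' \textup{ for some } g''\}$ is the number of occurrences of $w$ as a subword of $g$ (each occurrence corresponding to the unique prefix $g'$ preceding it, so that overlapping copies are counted separately). Since $|w|$ is a fixed constant, the claimed subadditivity $|g_1 g_2|_w \leq |g_1|_w + |g_2|_w$ is equivalent to the inequality $N(g_1 g_2) \leq N(g_1) + N(g_2) + |w|$. Indeed, $|g_1|_w + |g_2|_w = N(g_1) + N(g_2) + 2|w|$ while $|g_1 g_2|_w = N(g_1 g_2) + |w|$, so the constant $|w|$ is exactly the surplus budget available on the right-hand side.

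To bound $N(g_1 g_2)$, I would classify each occurrence of $w$ inside the concatenation $g_1 g_2$ according to its position relative to the seam separating the $g_1$-block from the $g_2$-block. An occurrence lying entirely within the first $|g_1|$ letters is precisely an occurrence of $w$ in $g_1$, and similarly for $g_2$; these contribute at most $N(g_1)$ and $N(g_2)$ respectively. The only remaining occurrences are the \emph{straddling} ones, which use at least one letter from $g_1$ and at least one letter from $g_2$.

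The key combinatorial point is that there are at most $|w| - 1$ straddling occurrences. Writing $w = a_1 \dots a_m$ with $m = |w|$, a straddling occurrence is pinned down by the number $j$ of letters of $w$ that fall inside $g_1$, where $1 \leq j \leq m - 1$. For each such $j$ there is at most one straddling occurrence, since it forces the last $j$ letters of $g_1$ to spell $a_1 \dots a_j$ and the first $m - j$ letters of $g_2$ to spell $a_{j+1} \dots a_m$, leaving no freedom in its location. Summing over $j$ gives at most $m - 1 = |w| - 1$ straddling occurrences, hence $N(g_1 g_2) \leq N(g_1) + N(g_2) + |w| - 1$, which is in fact marginally stronger than needed. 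Combining, $|g_1 g_2|_w = N(g_1 g_2) + |w| \leq N(g_1) + N(g_2) + (|w| - 1) + |w| \leq |g_1|_w + |g_2|_w$.

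There is no serious obstacle here beyond careful bookkeeping; the one step demanding attention is the count of straddling occurrences at the seam. The conceptual content is precisely that the raw occurrence count $N$ is \emph{not} subadditive — overlapping copies of $w$ can be manufactured across the boundary — and that the additive constant $|w|$ built into the definition of the $w$-norm is tailored exactly to absorb these boundary overlaps and restore subadditivity.
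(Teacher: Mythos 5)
Your proof is correct and follows essentially the same route as the paper's: both split occurrences of $w$ in $g_1g_2$ into those lying inside $g_1$, those inside $g_2$, and those crossing the seam, and bound the crossing ones by a constant absorbed by the additive $|w|$ in the definition of the $w$-norm. Your count of at most $|w|-1$ straddling occurrences is marginally sharper than the paper's bound of $|w|$ (the paper lumps the occurrence starting exactly at the first letter of $g_2$ into the boundary cases), but the argument is the same.
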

\begin{proof}
Let us define $I_w(g) :=  \textup{card}\{g'\in\Sigma\ :\ g=g'wg'' \textup{ for some }g'' \in \Sigma\}$.
If $w$ is a subword of $g_1 g_2$, then one has a decomposition $g_1g_2 = g'wg''$. Now, if $|g'| \leq |g_1| - |w|$, then 
$w$ is also a subword of $g_1$. Similarly, if  $|g'| \geq |g_1| +1$, then $w$ appears as a subword of $g_2$. 
Otherwise, there are at most $|w|$ possible choices for $|g'|$, which implies 
$$I_w(g_1 g_2) \leq I_w(g_1) + I_w(g_2) + |w|.$$
Adding $|w|$ to both sides yields the claim.
\end{proof}

Let $L_w(\mu)$ be the logarithmic moment with respect to the $w$--norm, that is 
$$L_w(\mu) :=\sum_g\mu(g)\log|g|_w.$$
Observe that for any finite word $g$ in $\s$, we have $|w|\leq |g|_w\leq |w|+|g|$, therefore,
if $L(\mu)$ is finite, then $L_w(\mu)$ also is finite.

Recall that a sequence $\{z_n\}_{n\geq 1}$ of random variables defined on the same measure space is \emph{stationary} if for each $k, n$ the law of the 
$n$-tuples $(z_1, z_2, \dots, z_n)$ and $(z_{k+1}, \dots, z_{k+n})$ is the same. 

\begin{lemma}[\cite{Behrang}, Lemma 3.6.4] \label{lem:stationary}
Let $\{z_n\}_{n\geq1}$ be a non-negative stationary process. If $\log(1+ z_1)$ has finite expectation, then
$$
\lim_k\frac{1}{k}\log(1+z_1+\cdots+z_k)=0
$$
almost surely and in $L^1$.
\end{lemma}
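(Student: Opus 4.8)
The plan is to prove the two inequalities separately, the lower bound being immediate and the upper bound carrying all the content. Write $Y_i := \log(1+z_i)$, so that $\{Y_i\}_{i\ge 1}$ is a non-negative stationary process with $E(Y_1) < \infty$, and set $S_k := z_1 + \cdots + z_k$ and $X_k := \frac{1}{k}\log(1+S_k)$. Since $S_k \ge 0$ we have $X_k \ge 0$, so it suffices to produce a matching upper bound tending to $0$.

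For the almost sure statement, the key point is that, even though $E(z_1)$ may be infinite and so $S_k$ may grow faster than linearly, one always has the crude bound $S_k \le k \max_{i\le k} z_i$. This gives
$$1 + S_k \le (1+k)\Big(1 + \max_{i\le k} z_i\Big),$$
and, taking logarithms and using that $\log(1+\cdot)$ is increasing (so $\log(1+\max_i z_i) = \max_i Y_i$),
$$X_k \le \frac{\log(1+k)}{k} + \frac{1}{k}\max_{i\le k} Y_i.$$
The first term tends to $0$, so it remains to show $\frac{1}{k}\max_{i\le k} Y_i \to 0$ almost surely. I would deduce this from the sublinear growth $Y_n/n \to 0$ a.s.: for fixed $\varepsilon > 0$, stationarity gives $\mathbb{P}(Y_n > \varepsilon n) = \mathbb{P}(Y_1 > \varepsilon n)$, while $\sum_{n\ge 1}\mathbb{P}(Y_1 > \varepsilon n) \le \frac{1}{\varepsilon}E(Y_1) < \infty$, so Borel--Cantelli yields $Y_n \le \varepsilon n$ for all large $n$. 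Splitting $\max_{i\le k}Y_i$ into a fixed finite part over $i \le N$ and a tail part bounded by $\varepsilon k$ then gives $\limsup_k \frac{1}{k}\max_{i\le k}Y_i \le \varepsilon$, and letting $\varepsilon \to 0$ finishes the almost sure convergence.

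For the $L^1$ statement I would instead use the multiplicative bound $1 + S_k \le \prod_{i=1}^k (1+z_i)$, valid because every $z_i \ge 0$; taking logarithms it reads $0 \le X_k \le A_k := \frac{1}{k}\sum_{i=1}^k Y_i$. By Birkhoff's ergodic theorem the averages $A_k$ converge in $L^1$, hence $\{A_k\}$ is uniformly integrable, and since $0 \le X_k \le A_k$ the family $\{X_k\}$ is uniformly integrable too. Together with the almost sure convergence $X_k \to 0$ proved above, Vitali's convergence theorem yields $X_k \to 0$ in $L^1$.

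The only quantitative hypothesis available is $E(\log(1+z_1)) < \infty$, with no control on $E(z_1)$, so the main obstacle is handling the possibly very heavy tails of the $z_i$. The crux is the replacement of the sum by its running maximum, $S_k \le k \max_{i\le k} z_i$, which recasts the problem as the sublinear growth of the maximum of a stationary integrable sequence; the finite logarithmic moment is consumed precisely in the Borel--Cantelli estimate above, and the rest of the argument is soft.
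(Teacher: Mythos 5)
Your proof is correct, but note that the paper itself offers no proof of this lemma to compare against: it is quoted verbatim from the first author's thesis (\cite{Behrang}, Lemma 3.6.4), so your argument has to stand on its own --- and it does. The two pillars both check out. For the almost sure part, the bound $1+S_k\leq(1+k)\bigl(1+\max_{i\leq k}z_i\bigr)$ and the identity $\log\bigl(1+\max_{i\leq k}z_i\bigr)=\max_{i\leq k}Y_i$ are valid, the Borel--Cantelli estimate $\sum_n \mathbb{P}(Y_1>\varepsilon n)\leq \varepsilon^{-1}E(Y_1)$ uses only that the $Y_n$ are identically distributed (which stationarity gives), and the deterministic fact that $a_n/n\to 0$ implies $\frac{1}{k}\max_{n\leq k}a_n\to 0$ is correctly handled by your split at a (sample-dependent) threshold $N$. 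For the $L^1$ part, $1+S_k\leq\prod_{i=1}^k(1+z_i)$ holds since all $z_i\geq 0$, domination $0\leq X_k\leq A_k$ by a uniformly integrable family does transfer uniform integrability (because $\{X_k>M\}\subseteq\{A_k>M\}$ and $X_k\leq A_k$ there), and Vitali then upgrades a.s.\ convergence to $L^1$. Two remarks on economy of means. First, you do not actually need Birkhoff for the uniform integrability of the Ces\`aro averages: by Jensen, $E[\phi(A_k)]\leq E[\phi(Y_1)]$ for any convex $\phi$, so a de la Vall\'ee Poussin function for $Y_1$ works uniformly for all $A_k$; this makes the proof independent of any ergodic theorem. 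Second, the most tempting alternative route --- Kingman's subadditive ergodic theorem applied to $W_k=\log(1+S_k)$, which is subadditive along the shift --- gives existence of the a.s.\ and $L^1$ limit for free, but identifying that limit as zero still requires an estimate of exactly the kind you prove (e.g.\ $E[W_k]/k\to 0$ via the same max trick), so your elementary argument is not longer than the ``high-tech'' one and is self-contained; the key insight, replacing the sum by its running maximum so that only the logarithmic moment is ever used, is precisely where the hypothesis $E\log(1+z_1)<\infty$ enters, and you have placed it correctly.
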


\begin{lemma}\label{lem:zero w-norm log}
If $L_w(\mu)$ is finite, then $\lim_k\frac{1}{k} L_w(\mu^{\star k})=0$.
\end{lemma}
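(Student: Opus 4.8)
The plan is to recognize $L_w(\mu^{\star k})$ as the expected logarithmic $w$-norm of the position $x_k$ of the random walk at time $k$, and then to control it using the subadditivity of the $w$-norm together with Lemma \ref{lem:stationary}. Indeed, since $\mu^{\star k}$ is the law of $x_k = g_1 \cdots g_k$, we have
$$L_w(\mu^{\star k}) = \sum_g \mu^{\star k}(g) \log|g|_w = E\big(\log|x_k|_w\big).$$

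First I would set $z_n := |g_n|_w$. As the increments $g_n$ are i.i.d.\ with law $\mu$, the sequence $\{z_n\}_{n \geq 1}$ is a non-negative stationary (in fact i.i.d.) process. To apply Lemma \ref{lem:stationary} I must check that $\log(1 + z_1)$ has finite expectation: since every finite word satisfies $|g|_w \geq |w| \geq 1$, one has $\log(1 + |g|_w) \leq \log 2 + \log|g|_w$, so $E(\log(1+z_1)) \leq \log 2 + L_w(\mu) < \infty$ by hypothesis. Hence Lemma \ref{lem:stationary} applies and gives
$$\lim_k \frac{1}{k} \log(1 + z_1 + \cdots + z_k) = 0$$
almost surely and in $L^1$.

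Next I would invoke the subadditivity of the $w$-norm proved in the preceding lemma, which yields $|x_k|_w = |g_1 \cdots g_k|_w \leq z_1 + \cdots + z_k$. Combined with the trivial lower bound $|x_k|_w \geq |w| \geq 1$, this sandwiches pointwise the quantity $\tfrac1k\log|x_k|_w$ between $0$ and $\tfrac1k\log(1 + z_1 + \cdots + z_k)$. Taking expectations gives
$$0 \leq \frac{1}{k} L_w(\mu^{\star k}) = \frac{1}{k} E\big(\log|x_k|_w\big) \leq \frac{1}{k} E\big(\log(1 + z_1 + \cdots + z_k)\big),$$
and since the right-hand side tends to $0$ by the $L^1$ part of Lemma \ref{lem:stationary}, the squeeze yields $\lim_k \frac{1}{k} L_w(\mu^{\star k}) = 0$.

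There is essentially no serious obstacle here; the only points requiring care are verifying the integrability hypothesis of Lemma \ref{lem:stationary}, which is handled by the elementary estimate $|g|_w \geq |w| \geq 1$, and ensuring that the limit passes through the expectation. The latter is precisely the reason one needs the $L^1$ convergence in the conclusion of Lemma \ref{lem:stationary} rather than mere almost-sure convergence.
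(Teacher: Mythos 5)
Your proof is correct and follows exactly the same route as the paper: set $z_n = |g_n|_w$, use subadditivity of the $w$-norm to bound $\log|x_k|_w \leq \log(1 + z_1 + \cdots + z_k)$, and apply Lemma \ref{lem:stationary} to the i.i.d.\ (hence stationary) sequence $\{z_n\}$. Your write-up is in fact slightly more careful than the paper's, since you explicitly verify the integrability hypothesis $E(\log(1+z_1)) < \infty$ and note that the $L^1$ part of the convergence is what allows passing to expectations.
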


\begin{proof}
Let $\{g_n\}_{n\geq1}$ be the increments of $\{x_n\}_{n\geq0}$, so for every $k$ one can write $x_k = g_1 \dots g_k$, hence by subadditivity
$$
\log |x_k|_w \leq\log(1 + |g_1|_w+\cdots+|g_k|_w).
$$
Applying Lemma~\ref{lem:stationary} implies the desired result.
\end{proof}

\begin{theorem}
Let $w \neq e$ be a finite word in $\s$ such that $L_w(\mu)$ is finite. Then, $(\p \s, \la)$ is the Poisson boundary of the random walk $(\s,\mu)$.
\end{theorem}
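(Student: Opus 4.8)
The strategy is to replace $\mu$ by its first--return measure $\mu_w$ and apply Theorem~\ref{thm:trivial conditional bnd} to the induced walk. The key point is that, even though the length projection $\mu_\phi$ may have infinite entropy, the first--return structure forces every conditional position to terminate at an occurrence of $w$, so that the number of its possible values is controlled by the $w$--norm, whose logarithm grows sublinearly by Lemma~\ref{lem:zero w-norm log}.

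First I would reduce to the case $w \in \mathrm{supp}(\mu)$. Since $\mu$ is generating, $w = s_1 \cdots s_m$ with each $s_i \in \mathrm{supp}(\mu)$, so $w$ lies in the support of $\mu^{\star m}$, and Lemma~\ref{lem:zero w-norm log} gives $L_w(\mu^{\star m}) < \infty$ for this fixed $m$. Because every bounded $\mu$--harmonic function is $\mu^{\star m}$--harmonic, the Poisson boundary of $(\s,\mu)$ is a quotient of that of $(\s,\mu^{\star m})$; as $(\p\s,\la)$ is in turn a quotient of the former and carries the common hitting measure, once $(\p\s,\la)$ is shown to be the Poisson boundary of $\mu^{\star m}$ the two quotient maps are squeezed into isomorphisms, so $(\p\s,\la)$ is also the Poisson boundary of $\mu$. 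Hence I may assume $\mu(w) > 0$ and pass to $\mu_w$: by Proposition~\ref{pro:coincidence} it has the same Poisson boundary as $\mu$, and by the lemma showing the hitting measures of $(\s,\mu)$ and $(\s,\mu_w)$ agree it determines the same boundary $(\p\s,\la)$. It therefore suffices to verify, for the induced walk $(\s,\mu_w)$, the two hypotheses of Theorem~\ref{thm:trivial conditional bnd}: that $H_1 < \infty$ and that the relative asymptotic entropy $h$ vanishes.

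The heart of the argument, which I expect to be the main obstacle, is the estimate
$$H_k \le 2\,L_w(\mu_w^{\star k}) + c,$$
where $H_k$ is the conditional entropy of the $k$--th position of the $\mu_w$--walk and $c$ is the constant of Lemma~\ref{lem : finite entropy}. To prove it I would fix $\g \in \p\s$ and work under the conditional walk $\pp^{\g}$ of $\mu_w$ at $\g$: since there is no cancellation the $k$--th position $x_{\tau_k}$ is a prefix of $\g$, and because its last increment equals $w$ it terminates exactly at the last of the $I_w(x_{\tau_k})$ occurrences of $w$ in $\g$. Consequently, given $\g$, the position $x_{\tau_k}$ is determined by the integer $R := I_w(x_{\tau_k})$, which satisfies $1 \le R \le |x_{\tau_k}|_w$. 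Applying the bound $H(\theta) \le 2L(\theta) + c$ of Lemma~\ref{lem : finite entropy} to the law of $R$ yields $H_k(\g) \le 2\int \log|x_{\tau_k}|_w \, d\pp^{\g} + c$, and integrating over $\g$ against $\la$ gives the displayed inequality by disintegration.

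It remains to extract the two hypotheses. Taking $k=1$ gives $H_1 \le 2\,L_w(\mu_w)+c$, so it suffices that $L_w(\mu_w) < \infty$; this follows from subadditivity of the $w$--norm and a Wald--type estimate, using $\log|x_{\tau_w}|_w \le \log\tau_w + \sum_{i\le\tau_w}\log|g_i|_w$ together with $E(\tau_w)=\mu(w)^{-1}<\infty$ (Lemma~\ref{lem: finite w}) and $E(\log|g_1|_w)=L_w(\mu)<\infty$. Once $L_w(\mu_w)<\infty$ is known, Lemma~\ref{lem:zero w-norm log} applied to $\mu_w$ gives $\tfrac1k L_w(\mu_w^{\star k}) \to 0$, whence $\tfrac{H_k}{k} \le \tfrac{2}{k}L_w(\mu_w^{\star k}) + \tfrac{c}{k} \to 0$, that is $h=0$. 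With $H_1<\infty$ and $h=0$ in hand, Theorem~\ref{thm:trivial conditional bnd} identifies $(\p\s,\la)$ with the Poisson boundary of $(\s,\mu_w)$, and hence of $(\s,\mu)$.
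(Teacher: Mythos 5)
Your proof is correct and follows essentially the same route as the paper: reduce to $\mu(w)>0$ by passing to a convolution power, replace $\mu$ by the first-return measure $\mu_w$ (Proposition~\ref{pro:coincidence}), observe that along a fixed infinite word a position of the $\mu_w$-walk is determined by its $w$-count, so that $H_k\leq 2L_w(\mu_w^{\star k})+c$ via Lemma~\ref{lem : finite entropy}, and conclude with Lemma~\ref{lem:zero w-norm log} and Theorem~\ref{thm:trivial conditional bnd}. The only deviation is minor: you establish $L_w(\mu_w)<\infty$ by a direct Wald-type estimate ($\log|x_{\tau_w}|_w\leq\log\tau_w+\sum_{i\leq\tau_w}\log|g_i|_w$ together with Wald's identity and $E(\tau_w)<\infty$), whereas the paper applies its Proposition~\ref{theo:finite moment} with $F(g)=\log(1+|g|_w)$; both are valid optional-stopping arguments.
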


\begin{proof}
Since we can replace $\mu$ by $\mu^{\star n}$ without changing the Poisson boundary, we may without loss of generality assume that $\mu(w)>0$.  
Moreover, by Proposition~\ref{pro:coincidence}, it is enough to describe the Poisson boundary associated with the random walk $\mu_w$ induced by the stopping time $\tau_w$.
By Lemma \ref{lem: finite w} the expected stopping time $E(\tau_w)$ is finite, therefore, applying Proposition \ref{theo:finite moment} when $F(g):=\log(1+|g|_w)$ implies that the logarithmic $w$-moment is also finite and  
 $$L_w(\mu_{w}) \leq E(\tau_w) (L_w(\mu) + \log 2).$$
Let $\s_w$ be the free semigroup generated by the support of $\mu_{w}$, and let us 
denote by $\q$ the probability measure on the space $\Omega$ of sample paths with respect to $\mu_w$. 
Let us disintegrate $\q$ with respect to the system of conditional measures $\{\q^{\g}\}_{\g\in\partial \Sigma}$, so that
\begin{equation}\label{eq: disintegration}
\q=\int_{\partial \Sigma}\q^{\g} \ d\lambda(\g).
\end{equation}
Since  the random walk $(\Sigma_w,\mu_w)$ has finite logarithmic moment with respect to the $w$--norm, 
$$
L_w(\mu_w)=\sum_{x \in \Sigma} \mu_w(x)\log|x|_w=\int_{\Omega}\log|x_1|_w \ d\q(\boldsymbol{x})<\infty,
$$
which implies that $\lambda$--almost every conditional probability measure $\q^{\g}$ has finite logarithmic moment with respect to the $w$--norm, that is 
$$
L_1(\q^{\g}):=\int_{\Omega} \log|x_1|_w \ d\q^{\g}(\x)<\infty.
$$
Similarly, let us define for any $k$ and and $\g \in \p \s$
$$L_k(\q^{\g}) := \int_{\Omega} \log|x_k|_w \ d\q^{\g}(\x)$$
which by applying \eqref{eq: disintegration} satisfies
\begin{equation} \label{E:lk}
\int_{\p \s} L_k(\q^{\g})\ d\lambda(\g) = \int_{\Omega} \int_{\partial \Sigma}\log|x_k|_w \ d\q^{\g}(\x) d\lambda(\g) = L_w(\mu_w^{\star k}).
\end{equation}
Let us denote by  $H_k({\q}^{\g}) :=H_{\q}(\alpha_k|\g)$  the entropy of the $k^{th}$ step with respect to the conditional probability measure $\q^{\g}$,
namely
$$
H_k({\q}^{\g}) = - \sum_{x \in \Sigma} \q^{\g}(x_k = x) \log{\q^{\g}(x_k = x)} .
$$
Note that if two sample paths $\x$ and $\x'$ for the random walk $(\Sigma, \mu_w)$ lie on the same infinite word $\g$ 
and satisfy $|x_k|_w=|x'_k|_w=n$ for some $k$, then actually $x_k=x'_k$; therefore,  
$$
H_k({\q}^{\g}) =-\sum_n\q^{\g}\{\x :\ |x_k|_w=n\}\log\q^{g}\{\boldsymbol{x} :\ |x_k|_w=n\}
$$
Hence, by virtue of Lemma~\ref{lem : finite entropy}, we have
$$
H_k(\q^{\g})\leq 2L_k(\q^{\g})+c,
$$
hence, combining it with eq. \eqref{E:lk},
$$
H_k =\int_{\partial \Sigma}H_k(\q^{\g})\ d\lambda(\g)\leq 2\int_{\partial \Sigma}L_k(\q^{\g})\ d\lambda(\g)+c=2L_w(\mu_w^{\star k})+c.
$$
By Lemma~\ref{lem:zero w-norm log}, the relative asymptotic entropy vanishes, since
$$h = \lim_k \frac{H_k}{k} \leq \lim_k \frac{2 L_w(\mu_w^{\star k})+c}{k} = 0.$$
Consequently,  Theorem~\ref{thm:trivial conditional bnd} implies that 
$(\partial \s, \lambda)$ is the Poisson boundary.
\end{proof}

\bibliographystyle{alpha}
\bibliography{biblography}

\end{document}